\title{The symmetric power and etale realisation functors commute}
\author{Arnav Tripathy}
\newcommand\nc{\newcommand}
\nc\linesep{\bigskip}
\nc\newprob[1]{\marginnote{#1}[\parskip]}
\nc\bA{\mathbb A}
\nc\bC{\mathbb C}
\nc\bD{\mathbb D}
\nc\bR{\mathbb R}
\nc\bZ{\mathbb Z}
\nc\bQ{\mathbb Q}
\nc\bP{\mathbb P}
\nc\bV{\mathbb V}
\nc\bW{\mathbb W}
\nc\bG{\mathbb G}
\nc\brac[1]{\langle#1\rangle}
\nc\abs[1]{\lvert#1\rvert}
\nc\norm[1]{\lVert#1\rVert}
\nc\onto{\twoheadrightarrow}
\nc\into{\hookrightarrow}
\nc\lto{\longrightarrow}
\nc\action{\curvearrowright}
\DeclareMathOperator
\nc\eps{\varepsilon}
\nc\tsym{\widetilde{\text{Sym}}}
\nc\oarrow[1]{\overset{#1}\to}
\nop\Hom{Hom}
\nop\End{End}
\nop\Aut{Aut}
\nop\im{Im}
\nop\id{id}
\nop\tr{Tr}
\nop\coker{coker}
\nop\Spec{Spec}
\nop\Jac{Jac}
\nop\Ext{Ext}
\nop\Tor{Tor}
\nc\op{\text{op}}
\nop\loc{Loc}
\nop\Frac{Frac}
\nc\ann{\text{ann}}
\nop\QCoh{QCoh}
\nop\Coh{Coh}
\nop\Sym{Sym}
\nop\gr{Gr}
\nop\Tot{Tot}
\nop\Fl{Fl}
\nop\tGamma{\widetilde\Gamma}
\nop\tloc{\widetilde{\text{Loc}}}
\nop\rep{Rep}
\nop\proj{Proj}
\nc\oo[1]{\overset\circ{#1}}
\nop\ospec{\oo{Spec}}
\nop\oTot{\oo{Tot}}
\nop\Bl{Bl}
\nop\Comp{Comp}
\nop\Ho{Ho}
\nop\cone{Cone}
\nop\LKE{LKE}
\nop\RKE{RKE}
\nop\pd{pd}
\nop\cd{cd}
\nop\depth{depth}
\nop\ass{Ass}
\nop\supp{supp}
\nop\codim{codim}
\nop\holim{\underset{\lto}{holim}}
\nop\dlim{\underset{\lto}{lim}}
\nop\uHom{\underline{\Hom}}
\nop\Pic{Pic}
\nop\Cl{Cl}
\nop\Div{Div}
\nop\rank{rank}
\nop\Der{Der}
\nop\dimrel{dim.rel}
\nc\sHom{\mathscr Hom}
\nc\sExt{\mathscr Ext}
\nc\dto{\dashrightarrow}
\nop\rspec{\bf Spec}
\nop\Gal{Gal}
\nop\Ind{Ind}
\nop\Frob{Frob}
\nop\Fib{Fib}
\nop\ratdim{rat\ dim}
\nop\Mod{Mod}
\nop\rat{rat}
\nop\val{val}
\nop\Rep{Rep}
\nop\colim{colim}
\theoremstyle{theorem}
\newtheorem{thm}{Theorem}
\newtheorem{lemma}[thm]{Lemma}
\newtheorem{cor}[thm]{Corollary}
\newtheorem{prop}[thm]{Proposition}
\theoremstyle{remark}
\newtheorem{rem}[thm]{Remark}
\begin{document} 
\maketitle
\tableofcontents

\begin{abstract}

We show that under mild hypotheses on a proper algebraic space $X$, the functors of taking its symmetric powers and its \'{e}tale realisation commute up to weak equivalence. We conclude an effective version of the Dold-Thom theorem for the \'{e}tale site and discuss the stabilisation results for the natural morphisms of \'{e}tale homotopy groups $\pi_k \Sym^n X \to \pi_k \Sym^{n+1} X$ in the context of the Weil conjectures. 

\end{abstract}

\section{Introduction}

Symmetric powers of schemes are interesting for several reasons. Perhaps the most immediate is that they are the simplest example of a moduli space as the most elementary construction of a moduli space of points on a scheme. As we discuss throughout the paper, symmetric powers also have various natural appearances in number theory and algebraic topology. The multitude of appearances of this ubiquitous functor in disparate but nonetheless related subjects makes it desirable to understand how these interpretations compare with one another. In this paper, we use the notion of \'{e}tale homotopy type as developed by Artin and Mazur to pass from an algebraic category of scheme-like objects to a topological category of topological space-like objects, and under mild hypotheses, our result is the best comparison that one might hope for:

\begin{thm} Let $X$ be a proper, normal, noetherian, geometrically connected algebraic space over a separably closed field $k$. The natural map $\Sym^n (X_{et}) \to (\Sym^n X)_{et}$ of pro-homotopy types is a weak equivalence in this category. \end{thm}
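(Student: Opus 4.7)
The plan is to factor the comparison map through an intermediate pro-homotopy type, namely
$$\Sym^n(X_{et}) \;=\; (X_{et})^n / S_n \;\lto\; (X^n)_{et} / S_n \;\lto\; (X^n / S_n)_{et} \;=\; (\Sym^n X)_{et},$$
and prove that each of the two intermediate arrows is a weak equivalence of pro-homotopy types. The first arrow is obtained by applying $(-)/S_n$ to the natural Künneth comparison $(X_{et})^n \to (X^n)_{et}$; the second is obtained by applying $(-)_{et}$ to the canonical $S_n$-invariant map $X^n \to \Sym^n X$ and noting that the induced map $(X^n)_{et} \to (\Sym^n X)_{et}$ is itself $S_n$-invariant, hence descends through the strict quotient.

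For the first arrow, I would invoke the \'etale K\"unneth theorem at the level of homotopy types. Under the stated properness and geometric connectedness hypotheses over a separably closed base field, the map $(X_{et})^n \to (X^n)_{et}$ is a weak equivalence, a reformulation at the level of pro-homotopy of the classical K\"unneth result for \'etale cohomology (SGA 4, Exp.\ XVII) combined with the Artin--Mazur machinery. Since this equivalence is manifestly $S_n$-equivariant in a rigid enough model (pro-simplicial sets, where strict quotients are defined), passing to $S_n$-quotients preserves it and gives the first arrow as an equivalence.

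For the second arrow, the task is descent along the finite surjective quotient map $\pi : X^n \to \Sym^n X$. I would construct a cofinal system of \'etale hypercoverings of $\Sym^n X$ whose pullbacks along $\pi$ form, with their induced $S_n$-action, a cofinal system in the $S_n$-equivariant \'etale hypercoverings of $X^n$; passing to $\pi_0$ and strict $S_n$-quotients on both sides then yields the identification of homotopy types. The pullback direction is automatic by functoriality, while the reverse direction requires that an $S_n$-equivariant \'etale morphism $U \to X^n$ descend to an \'etale morphism $U/S_n \to \Sym^n X$. This rests on the classical construction of quotients of normal schemes by finite group actions (SGA 1, Exp.\ V): the quotient exists in the category of normal algebraic spaces, and one must verify that the descended morphism is actually \'etale.

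The principal obstacle is precisely that the $S_n$-action on $X^n$ is not free along the diagonals, so $\pi$ itself fails to be \'etale and one cannot invoke Galois descent directly. To circumvent this, I expect to use the normality hypothesis essentially: an $S_n$-equivariant \'etale cover of $X^n$ is determined by its restriction to the $S_n$-free open $V \subset X^n$ via Zariski--Nagata purity, and compatible equivariant data there extends uniquely across the diagonal strata since $X$ (and hence $X^n$, $\Sym^n X$) is normal. Verifying that the descended cover remains \'etale above the image in $\Sym^n X$ of the diagonals, by controlling stabilisers and using purity of the branch locus, is where I anticipate the bulk of the technical work; granted this, composition of the two weak equivalences yields the theorem.
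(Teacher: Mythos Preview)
Your factorisation introduces the intermediate object $(X^n)_{et}/S_n$, and this is precisely the step the paper goes out of its way to avoid. As discussed in Section~2 (the paragraph on Carlsson's question), the strict quotient of an \'etale realisation by a group action is not defined at the level of Artin--Mazur pro-homotopy types; one must pass to Friedlander's pro-simplicial-set refinement even to formulate it. More seriously, your claim for the first arrow---that an $S_n$-equivariant weak equivalence $(X_{et})^n \to (X^n)_{et}$ descends to a weak equivalence of strict quotients---is false in general: equivariant weak equivalences only pass to strict quotients under free actions or, more generally, when one has equivalences on all fixed-point sets, and the $S_n$-action here is highly non-free along the diagonals. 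The reason $\Sym^n$ is well-defined on homotopy types is a special feature of the permutation action on a power, not a general fact about quotients, and it does not transport across the K\"unneth comparison without further argument.

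Your second arrow is exactly Carlsson's question $X_{et}/G \to (X/G)_{et}$ in the case $G = S_n$ acting on $X^n$, which the paper explicitly declines to settle in that generality. The purity argument you sketch is unlikely to close the gap: Zariski--Nagata purity requires regularity, whereas $\Sym^n X$ is only normal, and in any case purity controls $\pi_1$ but says nothing directly about higher cohomology or homotopy. The paper's route is entirely different: it invokes the pro-Whitehead theorem of Artin--Mazur to reduce the comparison to (i) an isomorphism on $\pi_1$, proved by a direct covering-space argument exploiting the total ramification of $X^n \to \Sym^n X$ over the diagonal (Section~6), and (ii) an isomorphism on twisted cohomology, obtained from Deligne's symmetric K\"unneth formula in SGA~4.XVII.5.5.21, which expresses $R\Gamma(\Sym^n X, \Sym^n K)$ as a derived symmetric power of $R\Gamma(X, K)$ and matches Dold's topological formula (Section~7). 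This bypasses entirely the need to form or compare strict equivariant quotients of homotopy types.
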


We stress that this comparison result is a weak equivalence ``on the nose''; in particular, we do not need to localise away from the characteristic of the base field. Certainly, properness is necessary for the strength of this conclusion. We will discuss the notation in the above theorem statement shortly, but first we note that this result should not be considered as a collection of independent results for various $n$, Rather, as we increase $n$, the results fit together naturally in commutative diagrams. Indeed, one of the major impetuses for our interest in this statement is how it interacts with the stabilisation maps that relate $\Sym^n$ to $\Sym^{n+1}$. The following result easily follows from Theorem $1$ by functoriality:

\begin{thm} Let $X$ be an algebraic space as above with a choice of base point $x \in X(k)$ and corresponding stabilisation map $\alpha_n: \Sym^n X \to \Sym^{n+1} X$ given by including an extra copy of the base point $x$. Then we have the following commutative diagram of pro-homotopy types: \begin{equation*} \xymatrix{ \Sym^n (X_{et}) \ar[r] \ar[d] & (\Sym^n X)_{et} \ar[d] \\ \Sym^{n+1} (X_{et}) \ar[r] & (\Sym^{n+1} X)_{et}, } \end{equation*} where the horizontal arrows are weak equivalences as before.\end{thm}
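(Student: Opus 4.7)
The plan is to exhibit the stabilisation maps on both sides as arising from the basepoint together with natural concatenation maps on symmetric powers, and then to invoke the naturality of the comparison map $c_n: \Sym^n(X_{et}) \to (\Sym^n X)_{et}$ of Theorem 1. Concretely, I would first rewrite $\alpha_n$ as the composition
$$\Sym^n X \xrightarrow{\id \times x} \Sym^n X \times X \xrightarrow{\mu_{n,1}} \Sym^{n+1} X,$$
where $\mu_{n,m}: \Sym^n X \times \Sym^m X \to \Sym^{n+m} X$ is the canonical concatenation arising from the $(\Sigma_n \times \Sigma_m)$-equivariant identity $X^n \times X^m \to X^{n+m}$ by passing to the quotient by $\Sigma_{n+m}$. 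The same factorisation applies to the topological stabilisation map on $\Sym^n(X_{et})$, with the basepoint $x_{et}: \Spec k \to X_{et}$ in place of $x$.

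Theorem 2 then reduces to two compatibilities: that $c_n$ is natural in $X$ with respect to the basepoint inclusion, and that $c_n$ intertwines the algebraic and topological concatenation maps $\mu_{n,m}$. The first is tautological, being simply the naturality of the comparison map applied to $x: \Spec k \to X$, combined with the fact that etale realisation preserves finite products of proper algebraic spaces---a fact already implicit in Theorem 1. Once both compatibilities are established, pasting the resulting two commutative squares yields the diagram of Theorem 2.

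The main obstacle is the second compatibility, namely showing that $c_{n+1} \circ \mu_{n,1}^{\text{top}} = \mu_{n,1}^{\text{et}} \circ (c_n \times c_1)$. This should follow directly from the construction of $c_n$ in the proof of Theorem 1, which presumably is defined as the natural map from the $\Sigma_n$-quotient of $X_{et}^n$ to the etale realisation of the $\Sigma_n$-quotient $X^n/\Sigma_n$, built functorially from the $\Sigma_n$-equivariant structure on $X^n$. Under this interpretation, $\mu_{n,1}$ on both sides is induced by the $(\Sigma_n \times \Sigma_1)$-equivariant identity $X^n \times X \to X^{n+1}$, which is manifestly preserved by etale realisation, so the required compatibility reduces to a routine bookkeeping of the further quotient by $\Sigma_{n+1}$.
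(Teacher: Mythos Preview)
Your proposal is correct and is precisely what the paper means by its one-line justification that Theorem~2 ``easily follows from Theorem~1 by functoriality.'' You have simply unpacked that word: factoring $\alpha_n$ through the concatenation map $\mu_{n,1}$ and the basepoint, and then observing that the comparison map $c_n$ is built from the $\Sigma_n$-equivariant product comparison $X_{et}^n \to (X^n)_{et}$ followed by the \'etale realisation of the quotient map, both of which are manifestly compatible with the $(\Sigma_n \times \Sigma_1)$-equivariant inclusion $X^n \times X \hookrightarrow X^{n+1}$.

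One small remark: for the mere \emph{commutativity} of the square you do not need that \'etale realisation preserves products up to weak equivalence, only that there is a natural comparison map $Y_{et} \times Z_{et} \to (Y \times Z)_{et}$, which exists for formal reasons. The weak equivalence of the horizontal arrows is Theorem~1 itself and is not re-argued here.
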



We note some corollaries of our main results above. First, recall the classic Dold-Thom theorem of algebraic topology, which may abstractly be written as $$\Sym^{\infty} X = (X \wedge H\mb{Z})_0,$$ i.e. that the infinite symmetric power of a space is given by the connected component of smashing with the Eilenberg-MacLane spectrum for the integers. More simply, homotopy groups of the infinite symmetric power are the reduced homology groups of the original space. In fact, Suslin and Voevodsky use this approach for the original definition of motivic homology in~\cite{VS}. As such, our comparison result indicates that different definitions of \'{e}tale homology agree, namely that the (reduced) homology of the \'{e}tale homotopy type agrees with the homotopy groups of the (algebraic) infinite symmetric power. This agreement of definitions stands in parallel with Voevodsky's demonstration in~\cite{Voevodsky} that the different definitions of motivic homology via symmetric powers or motivic complexes as above or via Bloch's higher Chow groups do in fact agree. While neither the statement nor proof of our result is as technical as Voevodsky's motivic results, we still find it foundationally satisfactory to have these comparison results firmly stated for \'{e}tale homotopy. Here then is the Dold-Thom theorem for the \'{e}tale site:

\begin{cor} For $X$ as above and any $n \ge 0$, we have $\pi_n^{et} \Sym^{\infty} X \simeq \tilde{H}_n(X_{et}, \mb{Z})$. \end{cor}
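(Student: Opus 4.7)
The plan is to bootstrap from Theorems 1 and 2 by passing to the limit in $m$, and then to invoke the classical Dold-Thom theorem levelwise on the pro-object $X_{et}$. Since $\Sym^{\infty} X$ is not itself an algebraic space, I first interpret the left-hand side as
\[ \pi_n^{et} \Sym^{\infty} X := \colim_m \pi_n^{et} \Sym^m X, \]
the colimit being taken along the stabilisation maps $\alpha_m$ of Theorem 2. Theorem 1 provides, for each $m$, an isomorphism $\pi_n^{et} \Sym^m X \simeq \pi_n \Sym^m(X_{et})$, and Theorem 2 says these isomorphisms are compatible with the stabilisations on both sides. Passing to the colimit in $m$ therefore yields $\pi_n^{et} \Sym^{\infty} X \simeq \colim_m \pi_n \Sym^m(X_{et}) \simeq \pi_n \Sym^{\infty}(X_{et})$, the last isomorphism being the standard fact that homotopy groups commute with the filtered colimit defining $\Sym^{\infty}$ from its finite approximations along the cofibrations $\alpha_m$.

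Next, I would invoke the classical Dold-Thom theorem levelwise on the pro-system. If $\{U_\alpha\}$ is a cofinal pro-system of pointed connected simplicial sets representing $X_{et}$, then for each $\alpha$ one has $\pi_n \Sym^{\infty} U_\alpha \simeq \tilde{H}_n(U_\alpha, \mb{Z})$. Taking the pro-limit in $\alpha$, the left-hand side assembles to $\pi_n \Sym^{\infty}(X_{et})$ while the right-hand side assembles, essentially by definition, to the reduced \'{e}tale homology $\tilde{H}_n(X_{et}, \mb{Z})$. Chained with the previous step, this gives the corollary.

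The main obstacle will be justifying the commutation of $\Sym^{\infty}$ with the pro-limit structure: $\Sym^{\infty}$ is itself a sequential colimit of the $\Sym^m$, while the pro-structure encodes a cofiltered limit, and such colimit-limit interchanges require care. The cleanest resolution is to stay at the level of homotopy groups throughout, where the sequential colimit behaves well because the stabilisation maps are closed inclusions of basepoints, hence cofibrations, and the cofiltered pro-limit is absorbed into the definition of pro-homotopy groups. The normality, properness, and geometric connectedness hypotheses entered through Theorem 1 and guarantee that the pro-system $\{(\Sym^m X)_{et}\}$ is well-behaved enough that no $\lim^1$ pathologies obstruct this interchange. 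Beyond this bookkeeping, the argument is essentially formal, with Dold-Thom providing the only genuinely topological input.
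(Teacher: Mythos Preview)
Your approach is correct and essentially matches the paper's: the corollary is deduced from Theorems~1 and~2 together with the classical Dold-Thom theorem applied levelwise to the pro-object $X_{et}$. The one minor difference is that the paper sidesteps your colimit/pro-limit interchange concern entirely by invoking Corollary~4 (the stabilisation result) to replace $\Sym^{\infty}$ with a single sufficiently large $\Sym^m$; since for fixed $n$ the sequence $\pi_n \Sym^m(-)$ stabilises at $m = n+1$, no genuine colimit is needed and your worry about $\lim^1$ pathologies evaporates. Your route through the colimit is fine too, but the cleanest justification of the interchange you flag is precisely that same stabilisation fact, so in the end the two arguments coincide.
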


The \'{e}tale homotopy pro-groups $\pi_n^{et}$ are by definition the homotopy pro-groups of the \'{e}tale realisation $X_{et}$, which we expand on further in the next section. As for the infinite symmetric power $\Sym^{\infty} X$, one could define it as an ind-algebraic space after choosing some base point so as to define the stabilisation maps. However, due to the stabilisation result we state next, we may in practice simply use any sufficiently large symmetric power $\Sym^k X$ to obtain the correct homotopy pro-group in a given degree. Note that the above result involves integral \'{e}tale homology on the right, which does not have an immediate interpretation in terms of derived functors due to lack of enough projectives in any reasonable category; hence, what we mean by integral \'{e}tale homology is by definition the integral homology of the \'{e}tale realisation. One may use the universal coefficient theorem to derive a result with cohomology in some torsion coefficient group on the right hand side, which now does have an immediate interpretation familiar to algebraic geometers.

Furthermore, in the usual topological Dold-Thom result, not only do we know the stable limiting value of $\Sym^n X$ as $n \to \infty$, but we also have precise bounds on how quickly these finite approximants converge. As such, we expect to see a similar sort of stability phenomenon in the algebro-geometric case. In other words, as $n$ increases, we wish for the natural map $\Sym^n X \to \Sym^{n+1} X$ to become arbitrarily highly connected, just as in the topological category. Indeed, we have the following result: 

\begin{cor} For $X$ as above with a choice of base point $x \in X(k)$ and corresponding stabilisation maps $\alpha_n$, we have that $\pi_k \alpha_n: \pi_k^{et} \Sym^n X \to \pi_k^{et} \Sym^{n+1} X$ is an isomorphism for $k < n$ and a surjection for $k = n$. \end{cor}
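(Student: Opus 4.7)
The strategy is to use Theorem 2 to transport the stabilisation question to the topological side, where the relevant statement is classical. By Theorem 2 the horizontal arrows in its displayed square are weak equivalences of pro-homotopy types, so the map $\pi_k (\alpha_n)_{et}$ is identified with $\pi_k \tilde\alpha_n$, where $\tilde\alpha_n \colon \Sym^n(X_{et}) \to \Sym^{n+1}(X_{et})$ is the stabilisation map performed directly on $X_{et}$, pointed at the image of $x$. Hence it suffices to prove that $\tilde\alpha_n$ induces isomorphisms on $\pi_k$ for $k < n$ and a surjection for $k = n$.

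Since $X$ is geometrically connected over a separably closed base, $X_{et}$ is pro-connected, so I may choose a representing pro-system $\{Y_i\}$ of pointed connected simplicial sets via the Artin--Mazur construction. The symmetric power functor is then applied levelwise, and the property ``isomorphism on $\pi_k$ for $k<n$, surjection for $k=n$'' survives passage from levels to the pro-category, up to cofinal reindexing. Everything therefore reduces to the classical topological statement: for any pointed connected simplicial set $Y$, the inclusion $\Sym^n Y \into \Sym^{n+1} Y$ via the basepoint is $n$-connected.

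For this classical statement, I would appeal to the Dold-Thom filtration. The cofibre of $\Sym^n Y \into \Sym^{n+1} Y$ is the symmetric smash product $Y^{\wedge(n+1)}/\Sigma_{n+1}$; since $Y$ is $0$-connected, $Y^{\wedge(n+1)}$ is $n$-connected, and the $\Sigma_{n+1}$-quotient cannot decrease connectivity. The long exact sequence in reduced homology then gives isomorphisms on $\tilde H_k$ for $k<n$ and a surjection on $\tilde H_n$. Combining this with Dold-Thom's identification $\pi_k \Sym^\infty Y \cong \tilde H_k(Y;\mb Z)$, together with the convergence of the symmetric-product filtration (each additional stage raising connectivity by one), upgrades the homology statement to the desired homotopy statement. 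The principal technicality, arising because $Y$ is only required to be $0$-connected rather than simply connected, is this homology-to-homotopy passage; it is handled by the topological-abelian-monoid structure on $\Sym^\infty Y$ underlying the Dold-Thom framework, after which the pro-category bookkeeping from the previous paragraph completes the proof.
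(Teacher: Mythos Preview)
Your reduction to the topological statement via Theorem~2 and the levelwise passage through the pro-system is exactly the paper's approach. The gaps are in your proof of the topological stabilisation itself.

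The assertion that ``the $\Sigma_{n+1}$-quotient cannot decrease connectivity'' is false as a general principle, even for pointed actions with fixed basepoint. For instance, let $\mb{Z}/2$ act on $S^3$ by suspending the antipodal involution of $S^2$; the two suspension points are fixed, $S^3$ is $2$-connected, yet the quotient $\Sigma\,\mb{R}P^2$ has $H_2 \cong \mb{Z}/2$ and is therefore not $2$-connected. It is true that $Y^{\wedge(n+1)}/\Sigma_{n+1}$ is $n$-connected for connected $Y$, but this is essentially the content of Theorem~7 rather than a free move. The paper proves it through Lemma~24 (that $S^{nk-1}/S_n$ is $(n-1)$-connected, established by an inductive Borel--Moore homology computation) combined with a cell-by-cell analysis that reduces general $Y$ to products of symmetric powers of disks. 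Your one-line justification hides the entire difficulty.

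Your homology-to-homotopy upgrade is also incomplete. The abelian-monoid structure you invoke exists only on $\Sym^\infty Y$, not on the finite $\Sym^n Y$, so it does not directly control the finite-level homotopy groups; and the phrase ``each additional stage raising connectivity by one'' is precisely the conclusion to be proved, so appealing to it is circular. The paper instead argues directly on homotopy groups via the cell-attachment argument and Lemma~24. If one prefers to pass through homology first, the honest route is to cite a genuine homology-stability result (Steenrod, Dold, or Nakaoka) and then combine it with the explicit computation $\pi_1 \Sym^n Y \cong \pi_1(Y)^{ab}$ of Proposition~8 and the quantitative Whitehead theorem with twisted coefficients of Proposition~9. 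Either way, substantive work is required that your sketch does not supply.
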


\begin{rem} For some remarks on strictness, see the discussion of this result in section $4$. Futhermore, we should note that these theorems are only interesting in characteristic $p$; in characteristic zero, they follow via profinitely completing the results for the complex realisation functor. In particular, in characteristic zero, we have no need of the normality or properness assumptions. \end{rem}

An intriguing question posed to us by R. Vakil is how the splitting in the abstract form of the Dold-Thom theorem may be related to other forms of splitting of interest to algebraic geometers. The spectrum $X \wedge H\mb{Z}$ has no $k$-invariants and splits completely into shifts of Eilenberg-MacLane spectra, each one of which is responsible for precisely one cohomological degree of the original total cohomology of $X$. Meanwhile, one of the main questions in the Grothendieck program of standard conjectures surrounding the investigation of the Weil conjectures is the splitting of a variety into its cohomological pieces graded by degree, now in some appropriate category of motives. The question is then to relate such conjectural motivic splitting of the algebraic object $\Sym^{\infty} X$ to the topological splitting implicit in Dold-Thom. We do not pursue the question in this paper.

For knowledgeable readers, we comment on the assumptions in Theorem $1$ and its consequences. In addition to purely technical assumptions like local noetherianness, we assume our space $X$ is proper and normal. That we need $X$ proper is not surprising as even the Kunneth formula for fundamental groups of products, $\pi_1(X \times Y) \simeq \pi_1(X) \times \pi_1(Y)$, fails in the nonproper case in positive characteristic. For example, if our result held, it would imply that $\mb{A}^2 \simeq \Sym^2 \mb{A}^1$ had abelian \'{e}tale fundamental group. However, just considering $\mb{A}^2 \simeq \mb{A}^1 \times \mb{A}^1$, we have $\pi_1(\mb{A}^1) \hookrightarrow \pi_1(\mb{A}^2)$, since a connected finite \'{e}tale cover of $\mb{A}^1$ remains so after multiplying with a trivial second $\mb{A}^1$ factor. However, $\pi_1(\mb{A}^1)$ is a large, complicated pro-$p$ group because of Artin-Schreier extensions and is highly nonabelian. So the failure of Kunneth implies that for $X$ nonproper, we have little control over the fundamental group, and we have no reason to suspect it stabilises or does anything nice except after localising away from the characteristic. Normality, on the other hand, enters our argument in a somewhat more subtle way, where the real property that often enters our arguments is geometric unibranchness. We are unsure to what extent this assumption is necessary. Finally, we restrict in this paper to the case that the ground field $k$ is separably closed; all the results should admit an immediate extension to $k$ a general field, where on the topological side, the symmetric power needs to occur fibrewise in a suitable way over the base $(\Spec k)_{et} \simeq B\Gal(k)$, and such an extension is indeed of great conceptual interest as the original motivation for the results of this paper stemmed from the Weil conjectures, for which we take $k = \mb{F}_q$. However, we are in the process of establishing a far more general relative form of the results of this paper such that the case of a general base field will fall out as a simple corollary; we ask the interested reader to investigate this forthcoming paper for the details when confronted with a more general base.

We start in the next section by defining and giving a brief discussion of the technology used to formulate our results, particularly the \'{e}tale pro-homotopy type of Artin and Mazur. We include one last introductory section in section $3$ before moving on to the proofs. This section, of interest to the arithmetically-inclined reader, describes our point of view on this paper as a homotopical refinement of the weakest information present in the Weil conjectures and asks for similar refinements of stronger asymptotics. In section $4$, we present the topological story and the appropriate stabilisation result for symmetric powers. The main result we recall here is a quantitative version of Dold-Thom which not only identifies the stable limit of the sequence of homotopy types $\Sym^n X$ but also when exactly the sequence converges in any given homotopical degree. In the latter half of the paper, we move to the algebro-geometric side of the story; having already stated the main result we wish to prove here in the introduction, we begin with some necessary reductions in section $5$ before giving a lengthy discussion of the fundamental group in section $6$. Section $6$ is the technical heart of the paper: we concretely track the category of finite \'{e}tale covers in order to replicate the straightforward argument of section $4$ in the algebro-geometric category. Finally, in section $7$, we invoke a version of the Whitehead theorem due to Artin-Mazur for the pro-category and cohomological results of Deligne to finish the argument. An appendix provides some missing details of our topological argument from section $4$ that are not crucial to the main flow of the paper but nonetheless serve to keep the paper more self-contained.

We first and foremost owe a great deal to Ravi Vakil for extensive discussions and careful comments on this paper. We furthermore thank Marc Hoyois for a crucial comment on an initial draft of this paper. Finally, we thank Dan Berwick-Evans, Gunnar Carlsson, Daniel Litt, Gereon Quick, Kirsten Wickelgren, and particularly Sander Kupers for their helpful questions and insights. 

\section{Definitions and explanation of main statements}

We now define our notion of symmetric power in the algebraic and topological categories. On the algebraic side, given an algebraic space $X$ of finite type over a field $k$, we form the product $X^n$ over the base field $k$ together with its natural symmetric group $S_n$-action via permutation of the factors. We define $\Sym^n X$ as the quotient $X^n/S_n$, and here we really mean the ``naive'' quotient in the category of algebraic spaces as opposed to a stacky quotient (alternatively, take the coarse moduli space of the stacky quotient). We use the category of algebraic spaces in this paper to ensure that this quotient does indeed exist as it may not exist in the category of schemes.

In the topological category, given a space $X$, we again define $\Sym^n X = X^n / S_n$ as the ``naive'' quotient in the category of topological spaces (as opposed to a homotopy quotient or a quotient as a topological stack). The main comparison result that $\Sym^n (X_{et}) \stackrel{~}{\to} (\Sym^n X)_{et}$ is a weak equivalence holds true if we use the stacky quotient in the algebraic category and the homotopy quotient in the topological category, but this result is trivial to establish and does not have consequences for the stability phenomena we disuss in this paper. 

We use the \'{e}tale homotopy theory of Artin and Mazur in~\cite{AM} to extract homotopy types from algebraic spaces in order to formulate the comparison theorem. To be precise, Artin and Mazur construct a pro-object in the homotopy category of simplicial sets, but we will simply use the equivalent homotopy category of CW complexes for ease of discussion. Friedlander notably refined the \'{e}tale realisation functor to take values directly in pro-simplicial sets, but we do not need this level of sophistication here. We now briefly describe the \'{e}tale homotopy type construction. Given a suitably nice (locally noetherian) object $X$ with an \'{e}tale topology, we formally build some category from its covers such that the nerve of the category sees the underlying homotopy theoretic information $X$ in the \'{e}tale topology. The construction is entirely analogous to Borsuk's theorem, which says that a sufficiently nice space is homotopy equivalent to its Cech nerve. Furthermore, over $\mb{C}$, the idea that the \'{e}tale topology sees almost the same information as the classical topology except for only being able to make covers of finite degree still holds. More precisely, if $X$ is over the base field $\mb{C}$, denote by $X^{top}$ the topologification of $X$, i.e the $\mb{C}$-points of $X$ equipped with their classical topology. Then, if $X$ is normal, the \'{e}tale realisation $X_{et}$ is up to weak equivalence the profinite completion of the topological space $X^{top}$ by $12.10$ of~\cite{AM}.

\begin{rem} Given Friedlander's refinement of the \'{e}tale topological type to be valued in pro-simplicial sets, we could ask for an isomorphism in that category rather than the weaker question of weak equivalence of pro-homotopy types. We also do not address this question. \end{rem}

Now that we have acknowledged that the elements of our topological category are really pro-homotopy types rather than topological spaces, we reexamine our definition of the symmetric power functor. First, note that the symmetric power functor descends to the level of homotopy types: if $f : X \to Y$ is an equivalence of CW complexes, the natural morphism $\Sym^n f : \Sym^n X \to \Sym^n Y$ is also an equivalence. Indeed, using that an equivalence of CW complexes is an actual homotopy equivalence comprised of a pair of maps $f, g$, the fact that $\Sym^n$ is a functor allows us to take $\Sym^n f, \Sym^n g$ as a pair of maps exhibiting the homotopy equivalence of $\Sym^n X$ and $\Sym^n Y$. As such, we may talk about the symmetric power functor on homotopy types. Next, we define the symmetric power functor on pro-homotopy types ``levelwise'', i.e. if $\{X_{\alpha}\}$ is an pro-system for $X$, we define $\Sym^n X$ as given by the system $\{ \Sym^n X_{\alpha} \}$; all our results extend to this pro-category by functoriality.

G. Carlsson asked a natural question generalising Theorem $1$: suppose $X$ is a proper, geometrically connected algebraic space with an action of a finite group $G$. Is it in general true that the natural map $X_{et}/G \to (X/G)_{et}$ is a weak equivalence, perhaps under some mild hypotheses? For example, perhaps we take $X$ normal and suppose that the $G$-action on $X$ has a fixed point. The first observation is that we need more technology in order to even make sense of the question. By functoriality, the \'{e}tale realisation $X_{et}$ has a $G$-action, but if we view $X_{et}$ as only being a (pro-)homotopy type, it is impossible to take the quotient by the $G$-action in the desired way (as we are not interested in the homotopy quotient). As such, we have to use Friedlander's refinement of viewing $X_{et}$ directly as a pro-simplicial set; alternatively, we could use an equivariant refinement of \'{e}tale homotopy to view $X_{et}$ as a pro-object of the homotopy category of G-spaces. Both of these approaches have technical challenges which we are able to avoid in this paper due to the simple fact that the symmetric power construction respects homotopy equivalence. A further benefit to the particular case we consider here is that the homology of a symmetric power depends only on the homology groups of the original space, as originally shown by Dold in~\cite{Dold}. This sort of result makes it especially convenient to prove our main result via the Whitehead theorem, where we independently establish the result for the fundamental group and for homology. Hence, that the group action is $S_n$ acting by permutation on $X^n$ is extremely convenient and possibly necessary for the result. Returning to G. Carlsson's question, as posed there are immediate counterexamples such as $G = \mb{Z}/2$ acting by complex conjugation on $\Spec \mb{C}$. The intended form of the question therefore has the additional hypothesis that the $G$-action takes place over the base field, in which case we are unsure of the answer. Our best result, in general characteristic and not localised away from the characteristic of the field, is the one in this paper for the $S_n$ action on $X^n$ or at best something slightly stronger where $G$ is a sufficiently large subgroup of $S_n$.

Finally, we use the notion of weak equivalence for the category of pro-homotopy types from Definition $4.2$ of~\cite{AM}, which by Lemma $4.4$ there is equivalent to the usual notion of a map which induces isomorphisms on all homotopy (pro-)groups.  

\section{Aside: Motivation from the Weil conjectures}

Here we briefly turn to the number-theoretic importance of symmetric powers. Recall that the Weil conjectures for a smooth projective variety $X$ over a finite field $\mb{F}_q$ are typically phrased in terms of a generating function of the number of rational points of $X$ defined over the finite extensions of $\mb{F}_q$ via $$\zeta_X(t) = \exp \Big( \sum_{m=1}^{\infty} \frac{1}{m} |X(\mb{F}_{q^m})| t^m \Big),$$ where $|X(\mb{F}_{q^m})|$ denotes the number of rational points of $X$ over the field extension $\mb{F}_{q^m}$. However, by an elementary argument, we may re-express the zeta function above as follows: $$\zeta_X(t) = \sum_{n=0}^{\infty} |(\Sym^n X)(\mb{F}_q)| t^n.$$ We may now further use the Grothendieck-Lefschetz trace formula to express these point counts in terms of Frobenius traces on $\ell$-adic cohomology via the general formula $$|(\Sym^n X)(\mb{F}_q)| = \mathrm{sTr}(\Frob|H_c^*(\Sym^n X \otimes \overline{\mb{F}}_q, \mb{Q}_{\ell})),$$ where the $\mathrm{sTr}$ is the usual signed sum of traces by degree. The Weil conjectures are hence statements about (the Frobenius eigenvalues on) the topology of the sequence of spaces $\Sym^n X$. We now sketch how the Weil conjectures almost imply that $X$ connected is equivalent to stabilisation of the cohomologies of $\Sym^n X$, i.e. $$H^* \Sym^n X \stackrel{\sim}{\to} H^* \Sym^{n+1} X$$ for $* \ll n$, where we still mean $\mb{Q}_{\ell}$-cohomology of the base change to $\overline{\mb{F}}_q$ throughout. We take $X$ proper and thereby ignore the distinction between cohomology and compactly-supported cohomology. This argument uses the full strength of the Weil conjectures and does not even manage to prove the desired result (instead only speaking to the stabilisation of the Frobenius traces, e.g. completely ignoring subspaces on which Frobenius traces to zero). Nonetheless, we find it an interesting perspective for our topological stabilisation result.

First, $X$ connected is equivalent to a single factor of $(1 - t)$ in the denominator of $\zeta_X(t)$, with all other poles of the meromorphic function $\zeta_X(t)$ occurring at smaller values. As usual, ``smaller'' refers to the $q$-adic norm. By standard recurrence relation techniques, the fact that the largest pole is at $1$ means that, if we consider $\zeta_X(t)$ as a generating function for the Frobenius traces on $H^* \Sym^n X$, then these Frobenius traces are asymptotically constant. Of course, this observation is not surprising as the Frobenius trace on $H^* \Sym^n X$ for all $* > 0$ is already $q$-adically smaller than the contribution from the $H^0$ term by the Riemann hypothesis. However, something interesting happens if we instead consider the rational function $(1 - t) \zeta_X(t)$, which is now a generating function of the first finite difference of the Frobenius traces on $H^* \Sym^n X$. The fact that we had a single factor of $(1 - t)$ in the denominator of $\zeta_X(t)$ now means that the largest pole is $q$-adically at most $q^{1/2}$. Hence, the finite differences of Frobenius traces grow asymptotically like $q^{n/2}$ at most. Suppose now the cohomology groups did not stabilise, so that for some $j$, $H^j \Sym^n X$ continues to change for arbitrarily large $n$. Then, when we take the generating function of the finite difference (in $n$) of the Frobenius trace on the cohomology, that difference has a contribution from the $H^j \Sym^n X$ term for arbitrarily large $n$. However, the Riemann hypothesis says that the contribution to the Frobenius trace from the $H^j \Sym^n X$ term should be of order $O(q^{j/2})$, in contrast to the fact that these finite differences are supposed to be becoming arbitrarily small as $n$ increases. Thus, we have our supposed contradiction. Of course, there is not really any contradiction -- a priori, the Frobenius eigenvalues could conspire among themselves so that although each individually has magnitude $q^{j/2}$, they cancel in such a way that their sum does manage to become arbitrarily small as $n$ increases. For an extreme example of this phenomenon, we have absolutely no control over Frobenius-modules with trace zero. Hence, this argument is quite weak if interpreted literally as a heuristic that the Weil conjectures imply connected schemes have stable cohomology of symmetric powers, especially as showing stabilisation for rational cohomology is particularly easy. We instead take the perspective that from the viewpoint of the Weil conjectures, rational cohomological stabilisation is a slight topological strengthening of the presence of a single factor of $(1 - t)^{-1}$ in $\zeta_X(t)$. This paper then further strengthens that rational cohomological statement to an (integral) homotopical one. We hence regard the picture we present here of the stabilisation maps $\Sym^n X \to \Sym^{n+1} X$ becoming arbitrarily highly connected as a geometric explanation for some of the structure of the zeta function of $X$. This perspective immediately motivates the question: what are geometric explanations for other factors of $\zeta_X(t)$? What might a similar strengthening of the $(1 - qt)^{-1}$ factor in $\zeta_{\mb{P}^1}(t)$ look like? Cohomologically, we now expect some general statements on metastability; do these statements have corresponding refinements on the level of homotopy types?

\section{Stabilisation in the topological category}

We recall the arguments for the topological version of Corollary $4$ to set the stage for the algebro-geometric arguments to come in the next sections. We also note now that Corollary $4$ in the algebraic category follows immediately from the main comparison result of Theorem $1$ and the topological stabilisation result we establish in this section in Theorem $7$. We consider $(X, x)$ a pointed space homotopic to a CW complex. Of course, we really need to work in the category of pro-homotopy types, but as the symmetric power functor descends to the level of homotopy types and we define the symmetric power of a pro-homotopy type levelwise, one simply applies the below result ``levelwise'' on homotopy-types to derive the result for pro-homotopy types. Finally, we take as the base point in $\Sym^n X$ the image of the point $(x, x, \cdots, x) \in X^n$, denoted simply as $x$ by abuse of notation. Here is the general form of the stabilisation result we want:

\begin{thm} Given a pointed connected CW complex $(X, x)$, consider for any $n \ge 0$ the natural morphism $\alpha_n: \Sym^n X \to \Sym^{n+1} X$ given by including an extra copy of the base point $x$. Then $\pi_k (\alpha_n)$ is a surjection for $n = k$ and an isomorphism for $n > k$. \end{thm}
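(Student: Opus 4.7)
My plan is to identify the cofiber of the stabilisation map $\alpha_n$ as a quotient of a smash power of $X$, estimate its connectivity by a direct CW cell count, and then deduce the stabilisation range from a standard Blakers--Massey argument.

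First I would fix a CW structure on $X$ having the basepoint $x$ as the unique $0$-cell, so that $\alpha_n$ becomes a cofibration of CW complexes. From the definitions one then reads off
\[
\Sym^{n+1} X \big/ \Sym^n X \;\cong\; X^{\wedge (n+1)}/S_{n+1},
\]
where $S_{n+1}$ permutes the smash factors: the image of $\alpha_n$ in $\Sym^{n+1} X$ is exactly the $S_{n+1}$-orbit of tuples in $X^{n+1}$ having at least one coordinate at $x$, which is precisely the subspace collapsed when forming $X^{\wedge (n+1)}$. So quotienting $\Sym^{n+1} X$ by $\Sym^n X$ is the same as collapsing this subspace in $X^{n+1}$ and then passing to $S_{n+1}$-orbits.

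Next I would bound the connectivity of this cofiber. Every non-basepoint cell of $X$ has positive dimension, so every non-basepoint cell of $X^{\wedge (n+1)}$ has dimension at least $n+1$; the $S_{n+1}$-action does not alter cell dimensions, so $X^{\wedge (n+1)}/S_{n+1}$ has only the basepoint in dimensions $\le n$ and is thus $n$-connected. Combined with the trivial observation that $\Sym^n X$ is path-connected (being a continuous image of the connected space $X^n$), the Blakers--Massey theorem, applied to the cofibration $\Sym^n X \hookrightarrow \Sym^{n+1} X$ with $n$-connected cofiber, shows that $\alpha_n$ itself is an $n$-connected map: iso on $\pi_k$ for $k < n$ and surjective on $\pi_n$. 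This is precisely the claim.

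The main subtle point I expect is the numerical bookkeeping in the Blakers--Massey step. Since $X$ is assumed only connected, the base connectivity of $\Sym^n X$ is at worst $0$ and there is no slack in the $(m+c)$-form of Blakers--Massey, with $m$ the connectivity of the subspace and $c$ that of the cofiber. In our case the balance works out on the nose, but any looseness would ruin the sharp range. An alternative that avoids fine versions of Blakers--Massey would be to use Dold's theorem that the homology of $\Sym^n X$ depends only on the homology of $X$ to establish a homological stabilisation directly, and then upgrade to homotopy using that $\pi_1(\Sym^n X)$ is abelian for $n \ge 2$ (since $\Sym^n X$ sits in the commutative topological monoid $\Sym^\infty X$).
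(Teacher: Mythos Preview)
Your overall strategy—identify the cofiber of $\alpha_n$ as $X^{\wedge(n+1)}/S_{n+1}$, bound its connectivity, then pass back to the connectivity of $\alpha_n$—is essentially the same route the paper takes in its appendix, only rephrased in terms of the cofiber rather than in terms of attaching contractible pieces to $\Sym^n X$. The cofiber identification is correct, and the Blakers--Massey step can be made to work (though you should be aware it needs a bootstrap starting from surjectivity of $\pi_1(\alpha_n)$, which the paper establishes separately as Proposition~8).

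The genuine gap is in your connectivity estimate for the cofiber. The sentence ``the $S_{n+1}$-action does not alter cell dimensions, so $X^{\wedge (n+1)}/S_{n+1}$ has only the basepoint in dimensions $\le n$ and is thus $n$-connected'' is not a valid argument. The quotient $X^{\wedge(n+1)}/S_{n+1}$ does \emph{not} inherit a CW structure from the product structure on $X^{\wedge(n+1)}$: a cell such as $e \times \cdots \times e$ has stabiliser all of $S_{n+1}$ acting nontrivially, and its image in the quotient is $\Sym^{n+1} e$, which is contractible but not a disk (e.g.\ $\Sym^2 D^3$ is not a manifold). What one actually gets is a decomposition of the cofiber into contractible pieces $\prod_i \Sym^{n_i} D^{d_i}$ glued along their boundaries $\star_i\,\partial\Sym^{n_i} D^{d_i}$, and showing these boundaries are $(n-1)$-connected is precisely the content of the paper's Lemma~24 (that $S^{mk-1}/S_m$ is $(m-1)$-connected) combined with the join connectivity formula. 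This is where the real work in the argument lies, and a naive dimension count does not substitute for it: in general, quotienting an $(N-1)$-connected $G$-CW complex by $G$ need not yield an $(N-1)$-connected space when stabilisers act nontrivially on cells.

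Your alternative via Dold's homology computation is closer to what the paper cites first (Steenrod, Milgram). It is viable, but to upgrade to homotopy via Whitehead you need isomorphisms with all \emph{twisted} coefficient systems, not just $\mathbb{Z}$; knowing $\pi_1(\Sym^n X)$ is abelian is not by itself enough. The paper notes exactly this issue and then opts for the direct cell-attachment argument in the appendix instead.
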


To see that the given range is strict, at least for $n$ odd, we claim that for $C$ a smooth proper complex curve of genus $g$, the natural map $\pi_{2g-1} \Sym^{2g-1} C \to \pi_{2g-1} \Sym^{2g} C$ is not an isomorphism. As $C$ is a normal algebraic variety, this counterexample will also demonstrate strictness in our algebraic version of the result; indeed, all the homotopy groups simply become profinitely completed. To establish our claim, note that $\Sym^{2g-1} C$ is a $\mb{P}^{g-1}$-bundle over $\Jac C$ while $\Sym^{2g} C$ is a $\mb{P}^g$-bundle over $\Jac C$. For simplicity, we suppose here that $g \ge 3$ so that, as $\Jac C$ is a $K(\pi, 1)$, the long exact sequence in homotopy for a fibration yields that $\pi_{2g-1} \Sym^{2g-1} C \simeq \pi_{2g-1} \mb{P}^{g-1} \simeq \pi_{2g-1} S^{2g-1} \simeq \mb{Z}$ and $\pi_{2g-1} \Sym^{2g} C \simeq \pi_{2g-1} \mb{P}^g \simeq \pi_{2g-1} S^{2g+1} = 0$. Hence the map is not an isomorphism. 

We now work our way up to showing this theorem. We first show that fundamental groups of symmetric powers behave as advertised by establishing the following statement:

\begin{prop} Given $(X, x)$ as above, for any $n \ge 2$, the natural morphism $$\pi_1(X, x) \to \pi_1(\Sym^n X, x)$$ factors through $\pi_1(X, x)^{ab}$ and induces an isomorphism $$\pi_1(X, x)^{ab} \stackrel{\sim}{\to} \pi_1(\Sym^n X, x).$$ \end{prop}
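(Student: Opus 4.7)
I would prove the proposition by establishing three independent facts about the natural map $i_*\colon\pi_1(X,x)\to\pi_1(\Sym^n X,x)$ induced by the inclusion $i\colon y\mapsto[y,x,\ldots,x]$: (a) it kills commutators, (b) it is surjective, and (c) the descended map $\bar i_*\colon\pi_1(X,x)^{ab}\to\pi_1(\Sym^n X,x)$ admits a left inverse, hence is injective. Steps (a) and (b) both reduce to the K\"unneth identification $\pi_1(X^n)=\pi_1(X)^n$ together with the elementary observation that in $\Sym^n X$ the class $[y,x,\ldots,x]$ is independent of which coordinate carries $y$, so all $n$ coordinate inclusions $X\hookrightarrow\Sym^n X$ literally coincide as based maps. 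Step (c) is cleanest via the universal property of abelianisation applied to Eilenberg--MacLane targets.

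For (a), I would let $\alpha,\beta\in\pi_1(X,x)$ and consider the loops $(\alpha,e,\ldots,e)$ and $(e,\beta,e,\ldots,e)$ in $X^n$, where $e$ is the constant loop at $x$. By K\"unneth they commute in $\pi_1(X^n)$ as they lie in distinct coordinate factors; pushing down via the quotient $p\colon X^n\to\Sym^n X$, they map to $i_*(\alpha)$ and $i_*(\beta)$ respectively by the coincidence of coordinate inclusions, so $i_*(\alpha)$ and $i_*(\beta)$ commute. For (b), observe that $(x,\ldots,x)$ is the unique preimage of $[x,\ldots,x]$ under $p$; fixing a CW structure on $X$ with $x$ the unique $0$-cell, the product CW structure on $X^n$ has $S_n$ acting freely on the $1$-cells, so a cellular representative for any loop $\sigma$ at $[x,\ldots,x]$ lifts $1$-cell by $1$-cell to a loop $\tilde\sigma$ in $X^n$ at $(x,\ldots,x)$. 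By K\"unneth, $[\tilde\sigma]=(\gamma_1,\ldots,\gamma_n)$, whose image in $\pi_1(\Sym^n X)$ is $i_*(\gamma_1)\cdots i_*(\gamma_n)$, proving surjectivity.

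For (c), let $A$ be an arbitrary abelian group and $\phi\colon\pi_1(X,x)\to A$ an arbitrary homomorphism. Realise $\phi$ as a based map $\bar\phi\colon X\to K(A,1)$, using a model of $K(A,1)$ as a topological abelian group---for instance the geometric realisation of the simplicial abelian group on $A$ produced by Dold--Kan. The commutative $n$-fold sum $K(A,1)^n\to K(A,1)$ is $S_n$-invariant and so descends to a continuous map $\mu\colon\Sym^n K(A,1)\to K(A,1)$; the composite $\Phi=\mu\circ\Sym^n\bar\phi\colon\Sym^n X\to K(A,1)$ then satisfies $\Phi\circ i=\bar\phi$, because summing $\bar\phi(y)$ with $n-1$ copies of the basepoint returns $\bar\phi(y)$. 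On $\pi_1$ this reads $\Phi_*\circ i_*=\phi$; taking $A=\pi_1(X,x)^{ab}$ with $\phi$ the abelianisation map exhibits $\Phi_*$ as a left inverse of $\bar i_*$, and combining with the surjectivity from (b) makes $\bar i_*$ an isomorphism.

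The genuine technical obstacle I anticipate is ensuring that path-lifting in (b) is rigorous at the fat diagonal, since $p$ fails to be a covering map there; the cellular-approximation workaround sketched above hinges on the simple combinatorics of $1$-cells, where $S_n$ acts freely, and so sidesteps the issue entirely at the $\pi_1$-level. Everything else is formal: a careful bookkeeping of the K\"unneth decomposition and the universal property of Eilenberg--MacLane spaces.
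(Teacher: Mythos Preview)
Your outline is correct, and your step (c) is in fact cleaner than the paper's argument; but step (b) contains a misstatement and a gap that you should patch.

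In (b), the claim that $S_n$ acts freely on the $1$-cells of $X^n$ is false: the cell $e\times\{x\}\times\cdots\times\{x\}$ has stabiliser $S_{n-1}$. What is true, and all you need, is that this stabiliser acts \emph{trivially} on the cell, so that $p$ restricts to a homeomorphism on each closed $1$-cell; indeed the image of the $1$-skeleton of $X^n$ in $\Sym^n X$ is precisely $i(X_1)$. The more substantive issue is your invocation of ``a cellular representative for $\sigma$'': the quotient of a CW complex by a cellular group action inherits a CW structure only when every stabiliser acts trivially on its cell, and this fails for higher cells of $X^n$ (the swap on $e\times e$, for instance). You can repair this by noting that $\Sym^n X$ is built from $i(X_1)$ by attaching contractible pieces $\prod_j\Sym^{n_j}e_j$ of total dimension $\ge 2$, each along a path-connected boundary (a join of nonempty spaces, or a sphere $S^{d-1}$ with $d\ge 2$, or $\partial\Sym^m D^1\cong\partial\Delta^{m-1}$ with $m\ge 2$); van Kampen then gives that $\pi_1 i(X_1)\to\pi_1\Sym^n X$ is surjective without any lifting at all. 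The paper instead lifts an arbitrary loop through $p$ directly, pulling back the diagonal stratification of $\Sym^n X$ to a decomposition of $I$ and using that $p$ is a genuine covering map over each stratum, then observing that the endpoints must both go to $(x,\ldots,x)$ since that is the unique preimage of the basepoint. This is a different manoeuvre that avoids cell structures entirely.

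Your step (c) is correct and is a genuinely different route from the paper's. The paper obtains injectivity by invoking Dold--Thom and Hurewicz to identify $\colim_n\pi_1\Sym^n X\cong\pi_1(X)^{ab}$, then observing that a tower of surjections through which the identity factors consists of isomorphisms. Your construction of a retraction $\Sym^n X\to K(A,1)$ via the addition map on a topological-abelian-group model of $K(A,1)$ is more elementary and self-contained: it avoids the Dold--Thom black box and works for each $n$ individually rather than via a limiting argument. The paper's route has the compensating advantage of embedding the $\pi_1$ statement into the stabilisation framework it is headed toward anyway, but for the bare proposition your argument is the tidier one.
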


\begin{proof} Consider the quotient morphism $q: X^n \to \Sym^n X$. We claim that not only does $\pi_1$ applied to this morphism yield a surjection, but in fact every based loop $\gamma: (S^1, *) \to (\Sym^n X, x)$ lifts to a based loop (on the nose, not just up to homotopy): $$\xymatrix{ & (X^n, x) \ar[d]^{\pi} \\ (S^1, *) \ar@{-->}[ur] \ar[r]^{\gamma} & (\Sym^n X, x)}$$ Indeed, first we think of $S^1$ as $I / \partial I$, where $I$ is the interval, and we try to lift the map from $I$. Now, $q$ is naturally a map of stratified spaces, where the locally-closed strata are given by partitioning $\{1, ..., n\}$ into subsets and demanding that coordinates in the same subset are equal (and that coordinates in different subsets are distinct). Restricted to any stratum, $q$ is a covering space; for example, on the open stratum, we have a Galois covering space with Galois group $S_n$ while on the closed stratum (the diagonal, with all coordinates equal), we have the trivial covering space of degree $1$. Taking the preimage of this stratification under $\gamma$, we obtain a stratification of $I$ into locally closed subsets (i.e. collections of intervals), and we simply proceed from left to right, lifting along each interval by virtue of the fact that restricted to each stratum, $q$ really is a covering map. This procedure allows us to get a lift $I \to X^n$, but now it is straightforward to note that both endpoints are mapped to the same point as $x \in \Sym^n X$ has a unique lift, so we have in fact produced the desired lift $(S^1, *) \to (X^n, x)$. 

The above argument certainly implies that $\pi_1(q): \pi_1(X^n, x) \to \pi_1(\Sym^n X, x)$ is a surjection, but we have the canonical isomorphism $(\pi_1(X, x))^n \simeq \pi_1(X^n, x)$ induced by $\prod \pi_1(\iota_i)$, where $\iota_i$ is the inclusion $X \to X^n$ into the $i$th factor (given by the base point in other factors). As such, $$\prod \pi_1(q \circ \iota_i) : (\pi_1(X, x))^n \to \pi_1(\Sym^n X, x)$$ is a surjection. However, $q \circ \iota_i : X \to \Sym^n X$ is the same map for all $i$, so the map $\pi_1(X, x) \to \pi_1(\Sym^n X, x)$ induced by this one map must already be a surjection. As the map $X \to \Sym^n X$ is the composition $\alpha_{n-1} \circ \alpha_{n-2} \circ \cdots \circ \alpha_1$, we have that $$\pi_1 (\alpha_{n-1}) : \pi_1(\Sym^{n-1} X, x) \to \pi_1(\Sym^n X, x)$$ is also a surjection. We can go farther: if we let $\tau : X^2 \to X^2$ be the map given by switching the factors, and denoting once again $q: X^2 \to \Sym^2 X$, we have $q \tau = q$. Hence, given elements $\gamma_1, \gamma_2 \in \pi_1(X, x)$, we have \begin{align*} \pi_1(\alpha_1)(\gamma_1) \cdot \pi_1(\alpha_1)(\gamma_2) &= \pi_1(q) (\gamma_1 \otimes \gamma_2) \\ &= \pi_1(q \circ \tau) (\gamma_1 \otimes \gamma_2) \\ &= \pi_1(q) \pi_1(\tau)(\gamma_1 \otimes \gamma_2) \\ &= \pi_1(q) (\gamma_2 \otimes \gamma_1) \\ &= \pi_1(\alpha_1)(\gamma_2) \cdot \pi_1(\alpha_1)(\gamma_1). \end{align*} We already knew that $\pi_1(\alpha_1) : \pi_1(X, x) \to \pi_1(\Sym^2 X, x)$ was a surjection, but by the above, the map factors through $\pi_1(X, x)^{ab}$. In other words, we have a sequence of surjections $$\pi_1(X, x)^{ab} \twoheadrightarrow \pi_1(\Sym^2 X, x) \twoheadrightarrow \pi_1(\Sym^3 X, x) \twoheadrightarrow \cdots,$$ but by Dold-Thom and Hurewicz, we know the limiting value $$\colim \pi_1(\Sym^n X, x) \simeq \pi_1 (\Sym^{\infty} X, x) \simeq \tilde{H}_1(X; \mb{Z}) \simeq \pi_1(X, x)^{ab}.$$ Note that the colimit of a diagram of surjections is certainly surjected upon by any term of the colimit. Furthermore, we claim (again, from the definitions of the Dold-Thom and Hurewicz morphisms) that the composite map $$\pi_1(X, x)^{ab} \to \pi_1(\Sym^{\infty} X, x) \simeq \pi_1(X, x)^{ab}$$ is the identity, so in fact all the surjections considered above must be isomorphisms, as desired. \end{proof}

This part of the proof will transfer well to the algebro-geometric setting; of course, there, we are prohibited from thinking about $\pi_1$ in terms of loops, but the essential statement that $\pi_1(q)$ is a surjection due to its deeply ramified nature (in particular, the base point having a single lift) shall survive.

To proceed with the rest of the proof, we essentially have two choices: we can either proceed ``homotopically'' and directly prove that the maps on homotopy groups in low degrees are isomorphisms, or we could use a quantitative form of Whitehead's theorem which allows us to reduce homotopy isomorphisms to homology isomorphisms and an isomorphism on the fundamental group (which has already been taken care of). More precisely, we have the following: 

\begin{prop} Given a morphism of topological spaces or homotopy types $f: X \to Y$, the following two sets of conditions are equivalent: \\ \indent (i) $\pi_1 f$ is an isomorphism and $H_i(f, \mc{L})$ is an isomorphism for any $i < n$ and a surjection for $i = n$ for any local system $\mc{L}$. \\ \indent (ii) $\pi_i f$ is an isomorphism for $i < n$ and a surjection for $i = n$. \end{prop}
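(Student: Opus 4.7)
The strategy is to replace $f$ by an inclusion of CW complexes via the mapping cylinder, reducing to the case $X \subset Y$, and to use the long exact sequences of the pair $(Y, X)$ in both homotopy and (local coefficient) homology as a bridge between the two sets of conditions. In both directions the essential tool is the relative Hurewicz theorem, which we access by passing to universal covers in order to invoke its classical simply-connected form.

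For (ii) $\Rightarrow$ (i), the homotopy LES converts the hypothesis into $\pi_i(Y, X) = 0$ for all $i \le n$. Since $\pi_1 f$ is an iso (automatic from (ii) when $n \ge 2$, and the $n=1$ case is trivial), the universal covers $\tilde Y \to Y$ and $\tilde X \to X$ fit together so that $\pi_i(\tilde Y, \tilde X) = \pi_i(Y, X) = 0$ for $2 \le i \le n$; the simply-connected Hurewicz theorem then yields $H_i(\tilde Y, \tilde X; \mathbb{Z}) = 0$ for $i \le n$. Since $C_*(\tilde Y, \tilde X)$ is a bounded-below complex of free $\mathbb{Z}[\pi_1 Y]$-modules, its acyclicity through degree $n$ persists after tensoring with any local coefficient system $\mathcal{L}$, so $H_i(Y, X; \mathcal{L}) = 0$ for $i \le n$, and the homology LES delivers (i).

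For the main direction (i) $\Rightarrow$ (ii), I use the $\pi_1 f$-iso to identify $\tilde X$ with the pullback $\tilde Y \times_Y X$, so that $(\tilde Y, \tilde X)$ is a simply-connected pair. Applying the hypothesis of (i) to the local system $\mathcal{L} = \mathbb{Z}[\pi_1 Y]$ and using the standard identification $H_*(Y, X; \mathbb{Z}[\pi_1 Y]) \cong H_*(\tilde Y, \tilde X; \mathbb{Z})$, I get $H_i(\tilde Y, \tilde X; \mathbb{Z}) = 0$ for $i < n$ and a surjection at $i = n$, hence vanishing through degree $n$ by the integer-coefficient LES. The simply-connected relative Hurewicz theorem then yields $\pi_i(\tilde Y, \tilde X) = 0$ for $i \le n$. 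Combined with $\pi_i(\tilde Y, \tilde X) \cong \pi_i(Y, X)$ for $i \ge 2$ and the triviality of $\pi_1(Y, X)$ coming from $\pi_1 f$ surjective, the homotopy LES of $(Y, X)$ gives (ii).

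The main obstacle is the direction (i) $\Rightarrow$ (ii), specifically the point at which the hypothesis on \emph{all} local systems is invoked: it is precisely this strengthening beyond constant coefficients that produces homological control on the universal cover pair and thereby reduces the argument to the classical simply-connected Hurewicz theorem. The $\pi_1 f$-iso assumption of (i) is likewise indispensable here, since without it $\tilde X$ would fail to be simply connected and the reduction would break down.
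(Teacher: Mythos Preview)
Your argument is correct and complete. Note that the paper does not actually prove this proposition: it states it and then, in the surrounding discussion, only remarks that ``the proof may proceed via a Serre spectral sequence computation of the homotopy fibre of $f$ to show it is sufficiently highly-connected,'' using this to explain why twisted coefficients are needed. So there is no detailed proof in the paper to compare against, only a one-line hint at an approach.

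That hinted approach is genuinely different from yours. The paper would convert (ii) into $(n-1)$-connectedness of the homotopy fibre $F$ and then run the Serre spectral sequence $H_p(Y; \mathcal{H}_q(F)) \Rightarrow H_{p+q}(X)$, where the local systems $\mathcal{H}_q(F)$ on $Y$ are exactly what forces the twisted-coefficient hypothesis in (i). Your route instead passes to universal covers and invokes the simply-connected relative Hurewicz theorem; the twisted coefficients enter through the single choice $\mathcal{L} = \mathbb{Z}[\pi_1 Y]$, which computes $H_*(\tilde Y, \tilde X; \mathbb{Z})$. Your approach is arguably more elementary (no spectral sequences) and makes very transparent which local system is doing the work; the fibre-sequence approach is perhaps more conceptual and generalises more readily to other connectivity statements. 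Both are standard.

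One small wording issue: in (i) $\Rightarrow$ (ii) you write ``$H_i(\tilde Y, \tilde X; \mathbb{Z}) = 0$ for $i < n$ and a surjection at $i = n$,'' which does not parse for a relative group. What you mean is that the hypothesis on $H_i(f; \mathbb{Z}[\pi_1 Y])$ translates, via the long exact sequence of the pair, directly to $H_i(Y, X; \mathbb{Z}[\pi_1 Y]) = 0$ for all $i \le n$, and hence $H_i(\tilde Y, \tilde X; \mathbb{Z}) = 0$ for $i \le n$. With that correction the argument reads cleanly.
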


\begin{rem} Here, in the statement of the first condition, once we have the isomorphism of fundamental groups, we are using that isomorphism to identify local systems $\mc{L}$ on $X$ and $Y$. \end{rem}

Note that in the homological hypothesis, we really do need twisted coefficient systems in general, as for example the proof may proceed via a Serre spectral sequence computation of the homotopy fibre of $f$ to show it is sufficiently highly-connected, and the relevant homology groups that show up are indeed local systems on the base in general. Of course, if all spaces involved are simply-connected, then this quantitative version of Whitehead's theorem tells us that homological isomorphisms (with just integral coefficients, even, by the universal coefficient theorem) up to a certain degree immediately provides homotopical isomorphisms up to that degree. 

The above points are the essential ones that will translate well into the algebro-geometric case we treat next. To recap, the general strategy of the proof is to use a Whitehead-type result to split into showing stability for the fundamental group and stability for twisted (co)homology. Moreover, stability for the fundamental group should follow at least in principle from the highly ramified nature of $X^n \to \Sym^n X$. We now leave the rest of the proof of Theorem $7$ to the appendix for interested readers.

\section{Background and reductions}

We now move to the algebro-geometric side of the story. First, we comment on extending the definition of the \'{e}tale homotopy realisation functor to the category of algebraic spaces, rather than just the category of schemes. The definition of \'{e}tale homotopy type as given in $(9.6)$ of~\cite{AM} is set up to work very generally; the definition simply requires a locally connected topos, so under mild conditions such as the usual local noetherianness hypothesis on our algebraic space, we may define its \'{e}tale realisation using the \'{e}tale topos for the algebraic space in question. However, although it is perhaps only a philosophical point, we may worry whether our original \'{e}tale homotopy theory for schemes compares well to the \'{e}tale homotopy theory for algebraic spaces. In other words, if $X$ is a scheme, we already defined $X_{et}$ via \'{e}tale hypercovers built out of simplicial schemes, but now we think of $X$ as an algebraic space and instead consider its \'{e}tale homotopy type $X_{et}$ defined via the larger category of \'{e}tale hypercovers built out of simplicial algebraic spaces. Do the two notions agree? The latter category can be strictly larger than the former as not all \'{e}tale morphisms are representable; in other words, we can have an \'{e}tale morphism $\mc{X} \to X$ with $X$ a scheme but $\mc{X}$ a non-scheme algebraic space, as in tag 03FN of~\cite{Stacks}. Fortunately, separated, quasifinite, finite type morphisms are automatically representable by the quasi-affinity guaranteed by Zariski's main theorem for algebraic spaces, as in tag 05W7 of~\cite{Stacks}. In particular, the only possible issue is nonseparatedness of the \'{e}tale morphism; as long as we can guarantee separatedness, our morphism is representable. Of course, it is trivially true that any morphism of algebraic spaces is \'{e}tale-locally (on the source) separated, so any hypercover is dominated by one where all morphisms are separated and hence representable. Thus if $X$ is a scheme, its hypercovers by simplicial schemes are cofinal among its hypercovers by simplicial algebraic spaces and so the two notions of \'{e}tale homotopy type do in fact coincide. 

For ease of citing later theorems, we find it useful here to reduce from the case of a separably closed base field to an algebraically closed base field. This observation follows immediately from the following lemmas:

\begin{lemma} Given a field extension $k \hookrightarrow K$, we have a natural isomorphism $\Sym^n (X_K) \xrightarrow{\sim} (\Sym^n X)_K$. \end{lemma}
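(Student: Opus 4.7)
The plan is to construct the natural morphism via the universal property of the quotient and to verify that it is an isomorphism by reducing, through étale descent, to the commutativity of finite-group invariants with flat base change. To begin, the canonical $S_n$-equivariant isomorphism $(X_K)^n \cong (X^n)_K$ composed with the base change of the quotient map $X^n \to \Sym^n X$ yields an $S_n$-invariant morphism $(X_K)^n \to (\Sym^n X)_K$, which by the universal property of the ``naive'' quotient of $(X_K)^n$ by $S_n$ in the category of algebraic spaces factors uniquely through a canonical morphism $\Sym^n(X_K) \to (\Sym^n X)_K$. It is this morphism that I would show is an isomorphism.

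To show that it is an isomorphism, I would reduce to the affine case via étale descent. Choose an étale cover of $X$ by affine schemes $U_i = \Spec A_i$; then the products $U_{i_1} \times_k \cdots \times_k U_{i_n}$ form an $S_n$-equivariant étale cover of $X^n$, from which $\Sym^n X$ is glued along the $S_n$-orbit structure on the index. Since étale covers are stable under base change and the gluing is natural in $k$, it suffices to check the isomorphism on each affine patch. In the affine case $X = \Spec A$, one has $\Sym^n X = \Spec(A^{\otimes_k n})^{S_n}$, and the lemma reduces to the algebraic statement that the natural map
\[ (A^{\otimes_k n})^{S_n} \otimes_k K \to \bigl((A \otimes_k K)^{\otimes_K n}\bigr)^{S_n} = (A^{\otimes_k n} \otimes_k K)^{S_n} \]
is an isomorphism. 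This is a standard consequence of the flatness of $k \to K$: for a finite group $G$, the invariants $M^G$ of a $k$-module $M$ are computed as a finite limit (an equalizer indexed by $G$), and flat base change preserves finite limits.

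The main obstacle is the globalization step — ensuring that the quotient $X^n/S_n$ as an algebraic space is genuinely glued from its affine étale charts in a manner compatible with base change. Because $\Sym^n X$ is taken here to be the coarse moduli space of the stacky quotient $[X^n/S_n]$, this is not entirely formal; however, it follows from the Keel–Mori theorem and its refinements, which guarantee that coarse moduli formation for separated Deligne–Mumford stacks commutes with flat base change. Combined with the affine computation above, this yields the claimed natural isomorphism.
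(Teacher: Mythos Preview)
Your proof is correct and follows the same underlying idea as the paper: identify the $S_n$-equivariant isomorphism $(X_K)^n \cong (X^n)_K$ and then pass to quotients. The paper's proof, however, is a single sentence that simply asserts that once the products are $S_n$-equivariantly identified, the (coarse moduli spaces of the) quotients are canonically isomorphic as well; no further justification is given. Your argument is considerably more careful: you explicitly construct the comparison morphism via the universal property, reduce to the affine case, and justify the commutation of $S_n$-invariants with flat base change, and you flag the globalization issue and resolve it via Keel--Mori on commutation of coarse moduli with flat base change. What you gain is rigor on precisely the point the paper glosses over; what the paper's terseness reflects is that, in context, this commutation is standard and the lemma is invoked only as a triviality to pass from a separably closed to an algebraically closed ground field.
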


\begin{rem} Note that the notation above is slightly misleading in that the symmetric power functors are not quite the same: on the left side, the symmetric powers are over $\Spec K$ while on the right, they are over $\Spec k$.\end{rem}

\begin{proof} We have that $(X_K \times_K \cdots \times_K X_K) \simeq (X \times \cdots \times X)_K$ via a canonical isomorphism that intertwines the $S_n$-actions, so the (coarse moduli spaces of the) quotients are also canonically isomorphic. \end{proof}

\begin{lemma} For $X$ a locally noetherian algebraic space over a separably closed field $k$ and choice of algebraic closure $k \hookrightarrow \overline{k}$, the induced morphism $(X_{\overline{k}})_{et} \to X_{et}$ is a weak equivalence. \end{lemma}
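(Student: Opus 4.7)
The plan is to exploit the fact that $k$ is already separably closed, so the extension $k \hookrightarrow \overline{k}$ is purely inseparable. Under this hypothesis the morphism $\Spec \overline{k} \to \Spec k$ is a universal homeomorphism, and since universal homeomorphisms are stable under base change, the projection $X_{\overline{k}} \to X$ is itself a universal homeomorphism of locally noetherian algebraic spaces.

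From here I would invoke the topological invariance of the small étale site: for a universal homeomorphism $f : Y \to X$, pullback along $f$ induces an equivalence of étale topoi $X_{et} \cong Y_{et}$. This is classical for schemes (SGA 1/SGA 4) and extends immediately to algebraic spaces because the étale site of an algebraic space is defined by smooth-locally translating to the scheme case, and universal homeomorphisms are also preserved under smooth (even étale) descent. One should cite the algebraic spaces version, e.g. the appropriate tag in the Stacks project for topological invariance of the small étale site of an algebraic space.

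With an equivalence of étale topoi in hand, the conclusion is essentially formal. The Artin--Mazur construction of $X_{et}$ in $(9.6)$ of \cite{AM} depends only on the locally connected topos $\mathrm{Sh}(X_{et})$ together with its connected-components functor: given the topos, one forms the cofiltered diagram of (isomorphism classes of) étale hypercovers, applies the connected components functor levelwise, and takes the resulting pro-object in the homotopy category of simplicial sets. An equivalence of locally connected topoi therefore produces an isomorphism of the associated pro-homotopy types, and in particular a weak equivalence in the sense of Definition $4.2$ of \cite{AM}.

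The only real subtlety I anticipate is the bookkeeping at step two, namely citing topological invariance of the étale site in a form that applies to (locally noetherian) algebraic spaces rather than just schemes. Once that citation is pinned down, steps one and three are immediate, and the whole lemma reduces to combining a well-known invariance property of the étale topos with the purely topos-theoretic nature of the Artin--Mazur construction.
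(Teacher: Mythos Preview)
Your proposal is correct and follows essentially the same route as the paper: both argue that $k \hookrightarrow \overline{k}$ being purely inseparable makes $X_{\overline{k}} \to X$ a universal homeomorphism (the paper phrases this as radicial, integral, and surjective), then invoke topological invariance of the \'{e}tale site for algebraic spaces (the paper cites the same Stacks tag 05ZG). Your added remark that the Artin--Mazur construction depends only on the locally connected topos is a helpful clarification the paper leaves implicit.
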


\begin{proof} As $k \hookrightarrow \overline{k}$ is a purely inseparable extension, the corresponding map on spectra is radicial, in addition to being integral and surjective; the same properties hold for any base-change and we may apply topological invariance of the \'{e}tale site as in tag 05ZG of~\cite{Stacks}. \end{proof}

\section{Back to the fundamental group}

Using the above reduction, we henceforth suppose that $k$ is algebraically closed. As foreshadowed in the topological story, we first prove the commutativity result for the fundamental group. In other words, we show that the natural map $f: \pi_1(\Sym^n (X_{et}), x) \to \pi_1((\Sym^n X)_{et}, x)$ is an isomorphism. The case $n = 1$ is automatic, so suppose $n \ge 2$. What we want to show may be summarised using the following commutative diagram: \begin{equation*} \xymatrix{ & \pi_1(X_{et}, x) \ar[d] \ar[ddl] \ar[ddr] \\ & \pi_1(X_{et}, x)^{ab} \ar[dl]_{\sim} \ar@{-->}[dr] \\ \pi_1(\Sym^n (X_{et}), x) \ar[rr]^{f} & & \pi_1((\Sym^n X)_{et}, x). } \end{equation*}

From the topological theory, we already know that the indicated map $$\pi_1(X_{et}, x)^{ab} \to \pi_1(\Sym^n(X_{et}), x)$$ is an isomorphism, so to show that $f$ is an isomorphism, it suffices to show that the right map factors through the abelian quotient and that the resulting map is an isomorphism. Once we make some general notes on the fundamental group, we shall prove this claim in two steps: first, showing that the map factors as indicated and is a surjection and second, showing that the map is an injection. In the larger scheme of things, it is the first half of this statement that really controls stabilisation; once we have surjectivity, our original desired motivation of homotopical stabilisation of symmetric powers immediately follows. Of course, there are some downfalls: we would not know the stable value or exactly when it stabilises, and for technical reasons we would only know the stability prime-by-prime. Showing the second half of the statement, i.e. the injectivity, allows for a far cleaner statement. 

Recall how the fundamental group of the \'{e}tale realisation compares to the \'{e}tale fundamental group of SGA $1$~\cite{SGA1}: according to $10.7$ of~\cite{AM}, for any noetherian scheme $X$, $\pi_1(X_{et}, x)$ is the enlarged fundamental group of SGA 3.X.6~\cite{SGA3}, but the profinite completion of the enlarged fundamental group is always the usual \'{e}tale fundamental group of SGA $1$. However, by $11.1$ of~\cite{AM}, for $X$ a normal pointed connected noetherian scheme, the \'{e}tale realisation $X_{et}$ is already profinite so that in particular, its fundamental group is already profinite. Hence as long as we work with normal schemes (or more generally, geometrically unibranch schemes) as we are doing here, there is no distinction between the notions of fundamental group. Note also that dealing with algebraic spaces rather than schemes does not change the above argument: if we define the fundamental group $\pi_1$ as in SGA $1$ via the category of finite \'{e}tale covers of our algebraic space, this $\pi_1$ still agrees with the $\pi_1$ of the \'{e}tale realisation of the algebraic space. Indeed, a geometrically unibranch algebraic space still has profinite \'{e}tale realisation as the proof of the relevant result $11.1$ in~\cite{AM} immediately passes to a generic point of the scheme anyway and so perfectly well applies to algebraic spaces. Hence, in the following, we denote by $\pi_1$ the \'{e}tale fundamental group and refer to SGA $1$ for relevant results as they immediately generalise to geometrically unibranch algebraic spaces.

\subsection{Surjectivity}

\begin{prop} For $(X, x)$ as in Theorem 1, $\pi_1 \alpha_n$ is a surjection for all $n$. Moreover, all the maps factor through $\pi_1(X, x)^{ab}$, i.e. we have a chain of surjections $$\pi_1(X)^{ab} \to \pi_1 \Sym^2 X \to \cdots \to \pi_1 \Sym^n X \to \cdots.$$ \end{prop}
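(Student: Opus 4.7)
I will port the topological argument of Proposition 8 to the algebro-geometric setting, with the role of ``every loop at the basepoint lifts to $X^n$'' replaced by a Galois-theoretic statement about \'etale covers. Write $q: X^n \to \Sym^n X$ for the quotient, $\iota_i : X \to X^n$ for the inclusion into the $i$-th factor with remaining coordinates set to $x$, and $\beta := q \circ \iota_i$, a map independent of $i$. Assuming the Kunneth isomorphism $\pi_1(X^n, (x,\ldots,x)) \simeq \pi_1(X, x)^n$ -- available in our setting, as $X$ is proper and geometrically connected, via SGA 1 extended to geometrically unibranch algebraic spaces -- once $q_* : \pi_1(X^n) \to \pi_1(\Sym^n X, x)$ is shown to be a surjection, the two formal inputs ``$\beta$ is independent of $i$'' and ``$q \circ \tau = q$ for the transposition $\tau: X^n \to X^n$'' allow us to replicate the topological argument verbatim to deduce that $\beta_*$ is itself surjective and factors through $\pi_1(X, x)^{ab}$. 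The chain of surjections $\pi_1(\Sym^k X) \twoheadrightarrow \pi_1(\Sym^{k+1} X)$ then follows since each sits under the surjection $\pi_1(X) \twoheadrightarrow \pi_1(\Sym^{k+1} X)$.

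\textbf{Key step: surjectivity of $q_*$.} Equivalently, given any connected finite \'etale cover $Y \to \Sym^n X$, I must show the base change $W := Y \times_{\Sym^n X} X^n$ is connected. The algebraic analogue of ``the basepoint has a unique lift'' is the set-theoretic fact that $(x,\ldots,x)$ is the unique preimage of $x$ under $q$, so that the fibre $W_{(x,\ldots,x)}$ is canonically identified with $Y_x$. The $S_n$-action on $X^n$ lifts to $W$ through the second factor of the fibre product, and since $(x,\ldots,x)$ is $S_n$-fixed, the induced action on $W_{(x,\ldots,x)} = Y_x$ is trivial. Now each connected component $W_i$ of $W$ is itself finite \'etale over $X^n$, hence surjects onto the connected scheme $X^n$, and in particular meets $W_{(x,\ldots,x)}$; since $S_n$ fixes that fibre pointwise, every $W_i$ is $S_n$-stable. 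Therefore $Y \simeq W/S_n = \bigsqcup_i W_i/S_n$, and the connectedness of $Y$ forces $W$ itself to be connected.

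\textbf{Assembly.} With $q_*$ surjective, the Kunneth identification turns $q_*$ into $(\gamma_1, \ldots, \gamma_n) \mapsto \beta_*(\gamma_1)\cdots\beta_*(\gamma_n)$, so the image of $\beta_*$ generates, hence equals, all of $\pi_1(\Sym^n X, x)$. Applying the relation $q_* \tau_* = q_*$ to elements of the form $\iota_{1,*}(\gamma)\iota_{2,*}(\delta)$ yields $\beta_*(\gamma)\beta_*(\delta) = \beta_*(\delta)\beta_*(\gamma)$ for all $\gamma,\delta \in \pi_1(X,x)$; thus the image of $\beta_*$ is abelian, giving the required factorisation through $\pi_1(X,x)^{ab}$, and the stabilisation surjections drop out.

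\textbf{Main obstacle.} The genuine work is the connectedness of $W$: the rest is bookkeeping formally identical to the topological case. Unlike the topological setting where one can stratify $X^n$ by partition type and lift paths stratum-by-stratum, the \'etale setting demands the Galois-theoretic observation above, which crucially exploits both the finiteness (ensuring each connected component is itself a finite \'etale cover of $X^n$) and the uniqueness of the preimage of $x$ (forcing trivial $S_n$-action on the fibre and thereby $S_n$-stability of each component). A minor subsidiary point is the extension of Kunneth for $\pi_1$ to proper geometrically unibranch algebraic spaces, which is implicit in the reductions at the start of the section.
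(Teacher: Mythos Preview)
Your argument is correct and is in fact cleaner than the paper's. The paper proves surjectivity of $q_*: \pi_1(X^n) \to \pi_1(\Sym^n X)$ by an induction on $n$ routed through Lemma~19, a divisibility bound $[\pi_1(Y):\im\pi_1(X)] \mid \deg f$ for finite generically \'etale $f$ between normal spaces with equality iff $f$ is \'etale. For $n=p$ prime the paper applies this to the degree-$p$ map $X \times \Sym^{p-1}X \to \Sym^p X$; for composite $n$ it factors through the various $\Sym^i X \times \Sym^j X$ and, for prime powers $p^k$, through $\Sym^p \Sym^{p^{k-1}}X$, controlling the index by gcd's of binomial and multinomial coefficients and their $p$-adic valuations. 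You bypass all of this combinatorics by directly exploiting the $S_n$-equivariant structure together with the totally ramified basepoint: every connected component of the pullback $W$ meets the fibre over $(x,\ldots,x)$, on which $S_n$ acts trivially, so each component is $S_n$-stable, and then $Y \simeq W/S_n$ forces $W$ connected. This is exactly the ``totally ramified point $\Rightarrow$ surjection on $\pi_1$'' statement the paper explicitly wishes for in the remark following Lemma~19 but does not supply. Your route needs no induction, no arithmetic of binomial coefficients, and does not even invoke normality of $X^n$ or $\Sym^n X$ for this step---only flatness of $Y \to \Sym^n X$ so that the $S_n$-quotient commutes with base change. The paper's approach, in exchange, develops Lemma~19 as a tool applicable to finite maps that are not group quotients. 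Your assembly step (K\"unneth plus $q\tau = q$) matches the paper verbatim.
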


\begin{lemma} Let $f: X \to Y$ be a finite, generically \'{e}tale morphism between normal algebraic spaces. Then $[\pi_1(Y) : \im \pi_1(X)]$ divides $\deg f$ with equality if and only if $f$ is \'{e}tale. \end{lemma}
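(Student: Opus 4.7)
\emph{Overall approach.} The plan is to restrict $f$ to the étale locus and apply the normality hypothesis twice, reducing the divisibility claim to an elementary fact about subgroup indices. Let $U \subseteq Y$ be the dense open on which $f$ is étale, and set $X_U = f^{-1}(U)$. Since $X$ is connected and normal (hence irreducible) and the generic point of $X$ lies over that of $Y$, the open $X_U$ is dense and connected in $X$, and $f_U \colon X_U \to U$ is a connected finite étale cover of degree $d := \deg f$.

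\emph{Divisibility.} Fix geometric base points with $\xi$ in the fiber $F := f_U^{-1}(\bar u)$, write $G = \pi_1(U, \bar u)$, and let $G_\xi \subseteq G$ denote the stabilizer of $\xi$ in the monodromy action on $F$. Since $X_U$ is connected, $G$ acts transitively on $F$, so $G_\xi$ coincides with $\im(\pi_1(X_U, \xi) \to G)$ and has index $d$. Using normality (and the facts about $\pi_1$ on dense opens of normal spaces recalled at the end of the previous section), both $G \twoheadrightarrow \pi_1(Y)$ and $\pi_1(X_U) \twoheadrightarrow \pi_1(X)$ are surjective; call the first kernel $N$. Tracking images through this diagram,
\[
\im\bigl(\pi_1(X) \to \pi_1(Y)\bigr) \;=\; \im\bigl(G_\xi \hookrightarrow G \twoheadrightarrow G/N\bigr) \;=\; G_\xi N / N,
\]
so $[\pi_1(Y) : \im \pi_1(X)] = [G : G_\xi N]$. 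The inclusions $G_\xi \subseteq G_\xi N \subseteq G$ then force this index to divide $[G : G_\xi] = d$, proving divisibility.

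\emph{Equality.} One has equality precisely when $G_\xi N = G_\xi$, i.e.\ $N \subseteq G_\xi$. In that case the monodromy representation of $f_U$ factors through $G/N = \pi_1(Y)$, so Galois theory produces a connected finite étale cover $Y' \to Y$ of degree $d$ with $Y' \times_Y U \cong X_U$ over $U$. Both $X$ and $Y'$ are then finite and normal over $Y$, agreeing over the dense open $U$, so the induced finite birational morphism $X \to Y'$ is an isomorphism by Zariski's main theorem for algebraic spaces, and hence $f$ is étale. The converse is trivial (take $U = Y$, $N$ trivial). The only delicate point in the whole argument is the double appeal to $\pi_1$-surjectivity on dense opens of normal spaces — precisely the role of the normality hypothesis — and this was just discussed in the preamble to the present subsection.
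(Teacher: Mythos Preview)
Your proof is correct and follows essentially the same line as the paper's: restrict to the \'etale locus, use normality to get surjectivity of $\pi_1$ from the dense open onto the whole space, compare indices, and in the equality case descend the cover and invoke uniqueness of normalisation in a function field extension. Two minor remarks: the surjectivity $\pi_1(U)\twoheadrightarrow\pi_1(Y)$ is actually proved \emph{inside} this lemma in the paper rather than beforehand, and your ``induced finite birational morphism $X\to Y'$'' is more cleanly justified (as the paper does) by noting that both $X$ and $Y'$ are normal and finite over $Y$ with the same function field, hence both coincide with the normalisation of $Y$ therein.
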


\begin{proof} Let $V \subset Y$ be an open subset over which $U = f^{-1}(V)$ is \'{e}tale; we take as base points for all our fundamental groups some geometric point of $U$ and its images in the other spaces but suppress them from the notation.

We now recall some basic statements in covering space theory, starting with the fundamental correspondence of covering space theory: finite $\pi_1(X)$-sets are in correspondence with finite \'{e}tale covers of $X$, where the orbits of the $\pi_1(X)$-action correspond to the various connected components of the cover and the association in one direction takes a connected finite \'{e}tale cover $\tilde{X} \to X$ and returns the cokernel of $\pi_1(\tilde{X}) \to \pi_1(X)$ as a $\pi_1(X)$-set. In particular, given any morphism of spaces $X_1 \to X_2$, the map $\pi_1(X_1) \to \pi_1(X_2)$ is a surjection if and only if every connected finite \'{e}tale cover of $X_2$ remains connected upon pullback to $X_1$. 

We have the following commutative diagram: \begin{equation*} \xymatrix{ 1 \ar[r] & \pi_1(U) \ar[r] \ar[d] & \pi_1(V) \ar[r] \ar[d] & S \ar[r] \ar[d] & 1 \\ & \pi_1(X) \ar[r] & \pi_1(Y) \ar[r] & T \ar[r] & 1 } \end{equation*} Here, the diagram is in the category of (pointed) sets. We claim $\pi_1(V) \to \pi_1(Y)$ is surjective, i.e. that the pullback of any connected finite \'{e}tale cover of $Y$ to $V$ remains connected. Suppose then that we have some \'{e}tale cover $\tilde{Y} \to Y$ that disconnects upon restriction to $\tilde{V} \to V$; we must show $\tilde{Y}$ was originally disconnected. First, note that an \'{e}tale cover of a normal space remains normal, as in tags 033C and 033G of~\cite{Stacks}. (Or rather, we define normality of algebraic spaces by using that normality may be verified \'{e}tale-locally, as per the lemmas.) Hence $\tilde{Y}$ must be the normalisation of $Y$ in the function field extension $k(\tilde{Y})= k(\tilde{V})$, but by assumption, $\tilde{V}$ is disconnected and so $k(\tilde{V})$ is a nontrivial direct sum of fields, so that the normalisation of $Y$ in this extension must also be disconnected, as desired. (See tag 0823 of~\cite{Stacks} to confirm that normalising algebraic spaces in function field extensions makes sense and is unique.) We will continue to use this idea of constructing \'{e}tale covers of normal spaces by using normalisation inside function field extensions in the remainder of the proof.

As $U \to V$ is \'{e}tale, the exact sequence $1 \to \pi_1(U) \to \pi_1(V) \to S \to 1$ arises from covering space theory and in particular, $|S| = \deg f$; meanwhile, $|T|$ is essentially by definition $[\pi_1(Y) : \im \pi_1(X)]$ and so the surjectivity of $S \to T$ already gives the divisibility in the statement of the lemma. Indeed, if we consider $S \to T$ as a map of sets, we a priori only have an inequality, but $S \to T$ is a morphism of transitive $\pi_1(V)$-sets, and in particular the transitivity yields that all the fibres of the morphism have the same cardinality. Now, if $f$ is \'{e}tale, we would again have an exact sequence $1 \to \pi_1(X) \to \pi_1(Y) \to T \to 1$ with $|T| = \deg f$. The harder direction is the converse, so suppose we have $|S| = |T| = \deg f$, i.e. that the surjection $S \to T$ is in fact an isomorphism. Using the fundamental correspondence of covering space theory, we form the connected finite \'{e}tale cover $\tilde{X} \to Y$ associated to the transitive $\pi_1(Y)$-set $T$. We can run exactly the above argument again, forming the exact sequence $1 \to \pi_1(\tilde{U}) \to \pi_1(V) \to \tilde{S} \to 1$, where $\tilde{U} = V \times_Y \tilde{X}$. We find that $|\tilde{S}| = |T| = \deg f$ yields $\tilde{S} \to T$ an isomorphism so in fact $S$ and $\tilde{S}$ are isomorphic as $\pi_1(V)$-sets. Using the fundamental correspondence again, their associated finite \'{e}tale covers $\tilde{U}$ and $U$ coincide. We hence have $U$ sitting as a dense open inside both $X$ and $\tilde{X}$, and as both $X$ and $\tilde{X}$ are normal, the uniqueness of normalisation in function field extensions yields that $X = \tilde{X}$ so that $X$ is \'{e}tale, as desired. \end{proof}

We remark that this approach of using normality hypotheses to pin down a finite cover once we know it on a dense open should be able to yield more information. For example, we would like to extend the purview of the lemma so that it also implies that if $f$ is totally ramified at some point, then $\pi_1(X) \twoheadrightarrow \pi_1(Y)$; in general, we might surmise that $[\pi_1(Y) : \im \pi_1(X)]$ divides the cardinality of any fibre of the map (where we really mean the cardinality of the reduction, as opposed to the length, which would always return the degree). 

\begin{proof} We now prove Proposition $18$. We would like to check that the hypotheses of the lemma apply, so let us first note that $X$ normal implies $\Sym^i X$ normal for all $i$, as products of normal algebraic spaces and quotients of normal algebraic spaces by finite groups exist and are normal, by Theorem $4.3$ of~\cite{Kolquot}. As usual, we take as base points for the fundamental groups of the $\Sym^i X$ the point consisting of $i$ repeated copies of $x$. These base points shall be clearly compatible with all maps between the symmetric powers that we consider; we shall suppress them from the notation. We shall throughout use the Kunneth theorem of SGA 1.X.1.7~\cite{SGA1} that if $X, Y$ are proper noetherian pointed schemes over an algebraically closed field, $\pi_1(X \times Y) \to \pi_1(X) \times \pi_1(Y)$ is an isomorphism. It is precisely at this step citing the Kunneth theorem that the necessity of properness as a hypothesis enters the argument.

We briefly remark that Grothendieck's Kunneth theorem extends to the case of (proper noetherian pointed) algebraic spaces: the main technical input is the use of Stein factorisation, but the formation of the Stein factorisation by taking a relative Spec and its basic properties from the theorem of formal functions all work for proper morphisms of noetherian algebraic spaces. Indeed, the main idea of the proof of the Kunneth formula is that if $X \to Y$ is a proper, separable morphism with $Y$ (locally) noetherian and connected, then if $y$ is a geometric point of $Y$, we have the right-exact sequence $\pi_1(X_y) \to \pi_1(X) \to \pi_1(Y) \to 1$. The main content of exactness in the middle is that an \'{e}tale cover $X' \to X$ is pulled back from an \'{e}tale cover of $Y$ if its base-change to $X_y$ is trivial (splits completely), so under these suitable hypotheses, there has to be a construction somehow of an \'{e}tale cover of $Y$: this step is taken care of exactly by (the second map in) the Stein factorisation of the composite morphism $X' \to Y$. So, as advertised, this technology continues to work in the category of algebraic spaces, and to check the necessary properties, e.g. that this second map is indeed \'{e}tale over $Y$, we can simply perform an \'{e}tale base-change (which functorially preserves the Stein factorisation) to make everything into a scheme so as to appropriate the proofs from SGA $1$. 

Now, we have a chain of maps $X \to \Sym^2 X \to \Sym^3 X \to \cdots$. We show by induction on $n$ that $\pi_1 X^n \to \pi_1 \Sym^n X$ is surjective. For $n = 1$, there is nothing to show, so consider the general case wherein we already know the statement for all smaller $n$. If $n = p$ is prime, consider the degree $p$ finite map $X \times \Sym^{p-1} X \to \Sym^p X$ between normal spaces; as it is certainly \'{e}tale over the locus of distinct points and is certainly not \'{e}tale over the locus where points coincide, Lemma $19$ applies to give us that $[\pi_1(\Sym^p X) : \im \pi_1(X \times \Sym^{p-1} X)]$ strictly divides $p$, i.e. $\pi_1 (X \times \Sym^{p-1} X) \to \pi_1 \Sym^p X$ is a surjection. Using the inductive hypothesis, we have $$\pi_1 X^p \simeq \pi_1 X \times \pi_1 X^{p-1} \twoheadrightarrow \pi_1 X \times \pi_1 \Sym^{p-1} X \simeq \pi_1 (X \times \Sym^{p-1} X) \twoheadrightarrow \pi_1 \Sym^p X.$$ Next, consider the case that $n$ is composite, and consider all pairs $(i, j)$ of positive integers with $i + j = n$. The map $X^n \to \Sym^n X$ factors as $X^n \to \Sym^i X \times \Sym^j X \to \Sym^n X$ and the inductive hypothesis yields $$\pi_1 X^n \simeq \pi_1 X^i \times \pi_1 X^j \twoheadrightarrow \pi_1 \Sym^i X \times \pi_1 \Sym^j X$$ so we see $$[\pi_1 \Sym^n X : \im \pi_1 X^n] = [\pi_1 \Sym^n X : \im \pi_1(\Sym^i X \times \Sym^j X)] \: \Big| \: \binom{n}{i},$$ where we used Lemma $19$ and that observation that the degree of $\Sym^i X \times \Sym^j X \to \Sym^n X$ is $\binom{n}{i}$. Now, it is an easy consequence of Kummer's theorem that $\gcd_{1 \le i \le n} \binom{n}{i}$ is $p$ if $n = p^k$ is a prime power and $1$ otherwise. In particular, if $n$ is not a prime power, we already have $\pi_1 X^n \twoheadrightarrow \pi_1 \Sym^n X$. If $n = p^k$ is a prime power with $k > 1$, note $X^{p^k} \to \Sym^{p^k} X$ also factors as $X^{p^k} \to \Sym^p \Sym^{p^{k-1}} X \to \Sym^{p^k} X$. As $\Sym^{p^{k-1} X}$ is also a space satisfying the hypotheses of Proposition $18$, the inductive hypothesis yields $$\pi_1 X^{p^k} \simeq \pi_1 (X^{p^{k-1}})^p \twoheadrightarrow \pi_1 (\Sym^{p^{k-1}} X)^p \twoheadrightarrow \pi_1 \Sym^p \Sym^{p^{k-1}} X,$$ and therefore $$[\pi_1 \Sym^{p^k} X : \pi_1 X^{p^k}] = [\pi_1 \Sym^{p^k} X : \pi_1 \Sym^p \Sym^{p^{k-1}} X] \: \Big| \: \frac{p^k!}{p! (p^{k-1}!)^p}.$$ As $\val_p p^k! = \frac{p^k - 1}{p-1}$, the integer on the RHS above is coprime to $p$ and we once again establish the desired surjectivity.

The rest of the argument proceeds as in the topological version of section $4$. We have shown $(\pi_1 X)^n \simeq \pi_1(X^n) \twoheadrightarrow \pi_1 \Sym^n X$ for all $X$, but the $n$ maps $\pi_1 X \to \pi_1 \Sym^n X$ are all the same map. Indeed, they are induced by the maps $X \to X^n \to \Sym^n X$, where the first map is the inclusion into the $i$th coordinate (with the base point $x$ in all other coordinates), followed by the quotient; of course, this quotient identifies all $n$ of these maps. Hence the image of $(\pi_1 X)^n$ is the same as the image of $\pi_1 X$, so we have established the surjectivity of $\pi_1 X \twoheadrightarrow \pi_1 \Sym^n X$. Finally, we may consider the commutative diagram \begin{equation*} \xymatrix{ X \times X \ar[rr] \ar[dr] & & X \times X \ar[dl] \\ & \Sym^2 X } \end{equation*} where the horizontal map switches the two factors. The diagram commutes, and upon applying $\pi_1$, we find that the map $\pi_1 X \to \pi_1 \Sym^2 X$ naturally factors through $(\pi_1 X)^{ab}$. These considerations suffice to establish the proposition. \end{proof}

\subsection{Injectivity}

\begin{prop} For $X$ as above and all $n \ge 2$, the natural map $\pi_1(X, x)^{ab} \to \pi_1(\Sym^n X, x)$ is an isomorphism. \end{prop}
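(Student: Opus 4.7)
The plan is to establish injectivity of the surjection $\pi_1(X, x)^{ab} \twoheadrightarrow \pi_1(\Sym^n X, x)$ furnished by Proposition~18 by constructing, for every continuous surjection $\chi: \pi_1(X, x)^{ab} \twoheadrightarrow A$ to a finite abelian group $A$ --- equivalently, every connected $A$-Galois finite \'{e}tale cover $\tilde X \to X$ --- a finite \'{e}tale cover $\tilde Z_n \to \Sym^n X$ whose pullback along the inclusion $\iota: X \to \Sym^n X$, $x \mapsto [x, *, \ldots, *]$, recovers $\tilde X$. Since $\pi_1(\Sym^n X, x)$ is already abelian (being a quotient of $\pi_1(X, x)^{ab}$), producing such a cover forces $\chi$ to factor through $\pi_1(\Sym^n X, x)$, and the surjection is then forced to be an isomorphism.

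For the construction, set $K := \ker(\Sigma: A^n \to A)$ where $\Sigma$ is the abelian sum homomorphism; $K$ is $S_n$-stable. Quotienting the $A^n$-Galois \'{e}tale cover $\tilde X^n \to X^n$ by $K$ yields an $A$-Galois \'{e}tale cover $Y := \tilde X^n / K \to X^n$, and the $S_n$-invariance of $K$ lifts the $S_n$-action on $X^n$ canonically to $Y$ commuting with the residual $A$-action. Define $\tilde Z_n := Y / S_n$ as a normal algebraic space quotient (existing by Theorem~$4.3$ of~\cite{Kolquot}); this is our candidate cover of $\Sym^n X$.

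The crux is showing $\tilde Z_n \to \Sym^n X$ is \'{e}tale of degree $|A|$. The key lemma is that the $S_n$-stabiliser of any point of $Y$ equals the stabiliser $S_{\vec x}$ of its image $(x_1, \ldots, x_n) \in X^n$: for $\sigma \in S_{\vec x}$ and a lift $[\tilde x_1, \ldots, \tilde x_n]$, the unique $g_i \in A$ satisfying $g_i \tilde x_i = \tilde x_{\sigma^{-1}(i)}$ telescope around each cycle of $\sigma$ to sum to zero, so $(g_1, \ldots, g_n) \in K$ fixes the lift. From this lemma and the \'{e}taleness of $Y \to X^n$, for $\tilde z \in \tilde Z_n$ above $z = [\vec x] \in \Sym^n X$ represented by an $S_{\vec x}$-fixed point $y \in Y$, we obtain $S_{\vec x}$-equivariant isomorphisms $\widehat{\mathcal O}_{Y, y} \cong \widehat{\mathcal O}_{X^n, \vec x}$, whence $\widehat{\mathcal O}_{\tilde Z_n, \tilde z} \cong \widehat{\mathcal O}_{X^n, \vec x}^{S_{\vec x}} = \widehat{\mathcal O}_{\Sym^n X, z}$. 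Combined with the fibre count totalling $|A|$ at every $z$, this proves the desired \'{e}taleness.

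Finally, $\iota^* \tilde Z_n \cong \tilde X$ by inspection: over $\iota(x)$, the $|A|$ fibre points are canonically parameterised by the fibre of $\tilde X$ at $x$ (varying only the first coordinate of the lift), and the residual $A$-action matches the original Galois action. The main obstacle in the argument is the stabiliser computation together with the local-ring descent, both of which are needed to handle the singular diagonal locus in $\Sym^n X$. The construction is notably integral and insensitive to the characteristic of $k$, in line with the paper's overarching theme.
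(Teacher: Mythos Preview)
Your argument is correct and takes a genuinely different route from the paper's. Both proofs begin the same way: from an $A$-Galois cover $\tilde X \to X$, form the $S_n$-equivariant $A$-torsor $Y = \tilde X^n/K \to X^n$ (the paper writes this as $Z_1$, produced via $X^n \to BA^n \to BA$). The divergence is in how one descends to $\Sym^n X$ and certifies \'etaleness over the diagonal. The paper works generically: restrict to the locus $U \subset X^n$ where $U \to V$ is an $S_n$-torsor, descend to a cover $\tilde V \to V$, then \emph{normalise} $\Sym^n X$ in $k(\tilde V)$ to extend across the diagonal, and finally invoke a descent lemma (their Lemma on pulling back \'etaleness along finite surjections) after identifying the pullback to $X^n$ with $Z_1$ by uniqueness of normalisation. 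You instead form $\tilde Z_n = Y/S_n$ directly and verify \'etaleness pointwise via the stabiliser computation and the comparison of completed local rings under $S_{\vec x}$-invariants. Your approach is more explicit over the singular locus and makes normality less essential (you invoke it only for the existence of the quotient, which holds regardless); the paper's approach avoids any pointwise analysis but leans hard on normality for the normalisation argument. Two small points worth tightening: your ``by inspection'' for $\iota^*\tilde Z_n \cong \tilde X$ tacitly uses that the square with vertices $Y$, $X^n$, $\tilde Z_n$, $\Sym^n X$ is Cartesian, which follows again from your stabiliser lemma (two points of $Y$ with the same image in both $X^n$ and $Y/S_n$ must differ by some $\sigma \in S_{\vec x}$, hence coincide); and the identification $\widehat{\mathcal O}_{Y/S_n,\tilde z} \cong (\widehat{\mathcal O}_{Y,y})^{S_{\vec x}}$ deserves one line (completion commutes with $S_n$-invariants since $\mathcal O_Y$ is finite over $\mathcal O_Y^{S_n}$ and completion is flat).
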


\begin{proof} For technical reasons, we want to replace $X$ with $X^{red}$. We claim that the symmetric power of a reduced space is also reduced: recall that the product of reduced spaces over an algebraically closed field remains reduced. Moreover, a geometric quotient of a reduced space is also still reduced as (\'{e}tale) locally, we are simply considering a ring of invariants, which certainly does not have any nilpotents if the original ring does not. Finally, Theorem $1.5$ of~\cite{Kolquot} implies that finite group quotients are indeed geometric quotients, so that symmetric powers of reduced spaces are reduced. Hence, replacing $X$ by $X^{red}$ would simultaneously replace $\Sym^n X$ by $\Sym^n X^{red} \simeq (\Sym^n X)^{red}$. We now recall that topological invariance of the \'{e}tale site, as in tag 05ZG of~\cite{Stacks}, implies as a special case that for any algebraic space $X$, $\pi_1(X^{red}) \stackrel{\sim}{\to} \pi_1(X)$ (in fact, the entire \'{e}tale homotopy type is unaffected). Hence to establish this proposition (or indeed any of the other theorems in this paper), we may throughout assume $X$ reduced.

By the surjectivity of $\pi_1(X, x)^{ab} \twoheadrightarrow \pi_1(\Sym^n X, x)$, we already know that $$\{\text{finite }\pi_1(\Sym^n X, x)\text{-sets}\} \to \{\text{finite }\pi_1(X, x)^{ab}\text{-sets}\}$$ is injective; if we could show that this map is a bijection, that would imply the desired result. Translating into the language of finite \'{e}tale covers, we find that we must find a way to lift every abelian finite \'{e}tale cover of $X$ to a finite \'{e}tale cover of $\Sym^n X$ (i.e. such that we obtain the original \'{e}tale cover under pullback along the canonical map $X \to \Sym^n X$.) We will now summarise the ensuing argument by the following diagram, to be gradually explained: \begin{equation*} \xymatrix{ & & Z_1 \simeq Z_2 \ar[ddl] \ar[d] \\ & \tilde{V} \ar[ddl] \ar[r] & Y \ar[ddl] \\ U \ar[r] \ar[d] \ar[drr] & X^n \ar[r] \ar[d] \ar[dr] & BA^n \ar[d] \\ V \ar[r] \ar@/_1pc/[rr] & \Sym^n X \ar@{.>}[r] & BA } \end{equation*}

Let's start reading the diagram with the bottom $2 \times 3$ rectangle. Denote by $U \subset X^n$ the complement of the ``fat diagonal'', i.e. the open locus where no coordinates coincide, and let $V \subset \Sym^n X$ be the image of $U$. Now, $U \to V$ is an \'{e}tale map (indeed, a Galois map with Galois group $S_n$). So, suppose now that we have an abelian finite \'{e}tale cover $\tilde{X} \to X$ from which we wish to produce a cover of $\Sym^n X$; of course, we may suppose that the covering space is in fact Galois with Galois group some finite abelian group $A$. Then the cover is equivalent to a map to the classifying stack $X \to BA$. (Of course, all of the following may be rephrased without using the technology of stacks, but the idea is clearer in this language.) Then, recalling that we have $(BA)^n \simeq B(A^n)$ and any morphism of groups $G \to H$ yields a morphism $BG \to BH$, the fact that $A$ is abelian means the addition map $A^n \to A$ is a group morphism and so we have a map $BA^n \to BA$. Hence let us form the composite map $X^n \to BA^n \to BA$, which represents an abelian \'{e}tale cover $Z_1 \to X^n$. We wish to try to descend this cover to $\Sym^n X$, but $X^n \to \Sym^n X$ is not flat, so we cannot directly descend. However, if we restrict to $U \hookrightarrow X^n \to BA$, then $U \to V$ is a categorical quotient; hence, as the map $U \to BA$ is invariant under the $S_n$-action on $U$, the map factors to provide a map $V \to BA$. We again rephase this map in terms of a finite \'{e}tale cover $\tilde{V} \to V$. Now, we normalise $\Sym^n X$ inside the function field extension $k(\tilde{V})$ to obtain a normal space equipped with a finite map $Y \to \Sym^n X$ which restricts to $\tilde{V}$ over $V$. We have that $Y \times_{\Sym^n X} X^n$ is a normal algebraic space $Z_2$ with a finite morphism $Z_2 \to X^n$. However, $Z_1$ and $Z_2$ agree over $U \subset X^n$ and so are both the normalisation of $X^n$ in the common function field extension, so by uniqueness of this construction, $Z_1$ and $Z_2$ are the same finite cover of $X^n$. Hence the finite map $Y \to \Sym^n X$ pulls back to an \'{e}tale map under the surjective map $X^n \to \Sym^n X$. We now use the following lemma (where we could certainly drop some of the hypotheses). Note that $X^n \to \Sym^n X$, as an example of the quotient of an algebraic space by a reductive group, is an affine morphism by Theorem $3.12$ of~\cite{Kolquot} and so in particular is finite for the current case of a finite group quotient. 

\begin{lemma} Suppose that we have a pullback diagram of locally noetherian algebraic spaces \begin{equation*} \xymatrix{ \tilde{X} \ar[d] \ar[r] & X \ar[d] \\ \tilde{Y} \ar[r] & Y} \end{equation*} where $\tilde{X} \to X$ is \'{e}tale, $X \to Y$ is finite surjective, $\tilde{Y} \to Y$ is finite, and $Y$ is reduced. Then $\tilde{Y} \to Y$ is \'{e}tale. \end{lemma}

\begin{proof} It suffices to check \'{e}tale-locally that a morphism is \'{e}tale, so base-change the entire diagram along an \'{e}tale atlas of $Y$ with schematic source. We may hence assume that all algebraic spaces in the above diagram are in fact schemes. Now, recall that one way to check flatness of a finite morphism over a reduced base is constancy of the Euler characteristic in fibres by, for example, 24.7.A.(d) of~\cite{Vakil}. By surjectivity of $X \to Y$, all fibres of $\tilde{Y} \to Y$ appear among the fibres of $\tilde{X} \to X$, so as $\tilde{X} \to X$ is flat so that its fibres have constant Euler characteristic, the constancy is certainly true for the fibres of $\tilde{Y} \to Y$. We hence have the desired flatness. Similarly, all geometric fibres of $\tilde{Y} \to Y$ appear amongst the geometric fibres of $\tilde{X} \to X$, so recalling that one definition of \'{e}tale is to be locally of finite presentation, flat, and have all geometric fibres a disjoint union of the base, we see that this property certainly holds for $\tilde{Y} \to Y$ given that it does for $\tilde{X} \to X$. \end{proof}

We have now produced a finite \'{e}tale cover $Y \to \Sym^n X$, which we would like to check yields the original cover of $X$ that we started with, but the natural map $X \to \Sym^n X$ factors through $\iota_1: X \to X^n$, which we recall is given by $x' \mapsto (x', x, x, \cdots, x)$ where $x$ is the base point of $X$. Hence as $Y \to \Sym^n X$ was produced by descent of $Z_1 \to X^n$, which was in turn produced via the composite map $X^n \to BA^n \to BA$, we simply need to check that the composite $X \stackrel{\iota_1}{\to} X^n \to BA^n \to BA$ yields the original map. This verification involves retranslating into the language of \'{e}tale covers; recall that if $\tilde{X} \to X$ is a $G$-torsor on $X$ and we have a morphism $G \to H$, one forms the associated $H$-torsor by the quotient $(\tilde{X} \times H) / G$ where we use the diagonal action of $G$. Hence in this situation, the cover of $X$ represented by this composite map is $((\tilde{X} \times A^{n-1}) \times A) / A^n$, where $A^n$ acts on the last factor $A$ by addition of all coordinates and $A^n$ acts on the first factor $\tilde{X} \times A^{n-1}$ by the first $A$ in $A^n$ acting on $\tilde{X}$ and the remaining $A^{n-1}$ acting by translation on the $A^{n-1}$ factor. By a shearing change-of-coordinates, we may rewrite this action as $(\tilde{X} \times A^n) / A^n$, where $A^n$ simply acts on the $A^n$ factor by translation, so that the quotient is the original cover $\tilde{X} \to X$, as desired. \end{proof}

\section{Whitehead and Deligne}

We now have the fundamental group statement of the desired commutativity relation, i.e. that $\pi_1(\Sym^n (X_{et}), x) \to \pi_1((\Sym^n X)_{et}, x)$ is an isomorphism. It remains to show that the commutativity also holds for cohomology with all local systems so that we can then conclude by citing an appropriate version of Whitehead's theorem. Theorem $4.3$ of ~\cite{AM} is this appropriate version of the Whitehead theorem, which we reproduce here in the special case of interest:

\begin{thm} Let $f: X \to Y$ be a morphism of pro-finite pro-homotopy types. Then the following are equivalent: \\ \indent (i) The morphism $f: X \to Y$ is a weak equivalence. \\ \indent (ii) The induced map $\pi_1(X) \to \pi_1(Y)$ is an isomorphism and for every twisted abelian finite coefficient group $M$, $H^q(Y, M) \to H^q(X, M)$ is an isomorphism for all $q$. \\ \indent (iii) THe induced map $\pi_1(X) \to \pi_1(Y)$ is an isomorphism and for every induced map of finite (Galois) covering spaces $X' \to Y'$ and every (untwisted) finite abelian coefficient group $A$, we have $H^q(Y, A) \to H^q(X, A)$ an isomorphism for all $q$. \end{thm}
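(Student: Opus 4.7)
The plan is to establish the chain of implications (i) $\Rightarrow$ (ii) $\Rightarrow$ (iii) $\Rightarrow$ (i), with the substantive content being the last step. The first implication is essentially formal: a weak equivalence induces isomorphisms on all homotopy pro-groups, and by the representability of cohomology via Postnikov sections (i.e.\ $H^q(Z, M) \simeq [Z, K(M, q)]$ in the appropriate homotopy category of pro-finite pro-spaces, interpreted using local systems when $M$ is twisted), any weak equivalence induces isomorphisms on cohomology with arbitrary twisted finite coefficients. The implication (ii) $\Rightarrow$ (iii) is immediate since untwisted coefficients form a special case of twisted ones.

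The implication (iii) $\Rightarrow$ (ii) goes as follows. Given a twisted finite abelian coefficient system $M$ on $Y$, the $\pi_1(Y)$-action on the finite group $M$ factors through a finite quotient, so there is a finite Galois cover $Y' \to Y$ trivialising the action, with pullback $X' \to X$ (a cover we may identify using the hypothesised $\pi_1$ isomorphism). On $Y'$ the coefficient system $M$ becomes the constant sheaf with value $A = M$ as an abstract abelian group, and $H^*(Y, M)$ is reconstructed from $H^*(Y', A)$ by the Hochschild--Serre spectral sequence for the Galois group. The same analysis on $X$ gives a morphism of Hochschild--Serre spectral sequences, and the hypothesised cohomology isomorphism on covers together with the $\pi_1$ isomorphism (which identifies the Galois groups and their actions) gives an isomorphism of $E_2$ pages, hence on abutments.

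The heart of the argument is (ii) $\Rightarrow$ (i). I proceed by induction on $n$ to show $\pi_n(f)$ is an isomorphism, with the base case $n=1$ given by hypothesis. Suppose $\pi_i(f)$ is an isomorphism for $i < n$. Form the Postnikov towers $\{\tau_{\le m} X\}$ and $\{\tau_{\le m} Y\}$ in the category of pro-finite pro-homotopy types (Artin--Mazur construct these level-by-level, approximating by truncations of finite towers and taking pro-limits). By the inductive hypothesis, $\tau_{\le n-1}(f)$ is a weak equivalence. The fibration $\tau_{\le n} Y \to \tau_{\le n-1} Y$ has homotopy fibre a twisted $K(\pi_n Y, n)$, classified by a $k$-invariant $k_Y \in H^{n+1}(\tau_{\le n-1} Y, \pi_n Y)$; similarly for $X$. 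The comparison of Serre (or Leray) spectral sequences for these two fibrations, combined with the hypothesis (ii) applied to the twisted coefficient system $\pi_n Y$ (transported to $X$ via $\pi_1(f)$), forces $\pi_n(f)$ to be an isomorphism; equivalently, the $k$-invariants match and $\pi_n$ is detected as a cohomology group of $\tau_{\le n-1}$ by the Hurewicz-type identification in the relative setting.

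The main obstacle is the pro-category bookkeeping: Postnikov towers, $k$-invariants, and Serre spectral sequences are all classical for ordinary homotopy types, and one must lift each of these constructions to pro-finite pro-homotopy types while controlling cofinality of the representing towers. Artin--Mazur handle this by working with strict towers of finite homotopy types and verifying that Postnikov decompositions can be chosen functorially compatibly with the tower structure, so that passing to pro-limits preserves the key exact sequences. A further subtlety is ensuring that the class of twisted finite coefficient systems appearing in the inductive step — specifically $\pi_n Y$, which is a pro-finite pro-abelian group — can be reduced to actual finite coefficient systems by replacing $\pi_n Y$ with its finite quotients and taking a limit, exploiting the pro-finiteness hypothesis throughout.
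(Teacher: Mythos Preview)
The paper does not prove this theorem: it is quoted verbatim as Theorem~4.3 of Artin--Mazur~\cite{AM}, and the only argument the paper supplies is the brief remark that (iii) is a special case of (ii) because cohomology of a finite cover with constant coefficients $A$ equals cohomology of the base with the induced twisted system $A[\pi_1/N]$. Your sketch is therefore not being compared against a proof in the paper but rather against the Artin--Mazur original, and on that count your outline is accurate: the chain (i)$\Rightarrow$(ii)$\Rightarrow$(iii) is formal (with the Shapiro-type identification just mentioned), your Hochschild--Serre argument for (iii)$\Rightarrow$(ii) is standard and correct, and the substantive step (ii)$\Rightarrow$(i) via Postnikov towers, $k$-invariants, and induction on $n$ is precisely the Artin--Mazur strategy, including the two technical points you flag (lifting Postnikov machinery to the pro-category and replacing the pro-finite $\pi_n Y$ by its finite quotients).

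One small quibble: your one-line justification of (ii)$\Rightarrow$(iii) (``untwisted coefficients form a special case of twisted ones'') undersells what is actually needed, since (iii) concerns cohomology of the \emph{covers} $X'$, $Y'$; the reduction goes through the identification $H^q(Y',A)\simeq H^q(Y, A[\pi_1/N])$, exactly as the paper spells out in the paragraph following the statement. With that clarification, your proposal is a faithful summary of the cited proof.
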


The third condition is a special case of the second as cohomology of a topological cover is the same as twisted cohomology with coefficients in the $\mb{Z}[\pi_1]$-module given by $A[\pi_1 / N]$. Here, $A$ is the original coefficient group, $N$ is the subgroup of $\pi_1$ corresponding to the cover, and $A[\pi_1/N]$ is the free $A$-group on generators $\pi_1/N$ (if $N$ is normal, so that $\pi_1/N$ is a group, this construction is the group ring construction) with $\pi_1$-action given by acting on the generators. As such, the third condition is saying that rather than needing all twisted coefficient systems, we can make do with those of the form $A[\pi_1/N]$, where $N$ is some finite-index subgroup. As stated in~\cite{AM}, the third condition asks for all finite covering spaces, but in fact only Galois covering spaces arise in the reduction from (ii) to (iii). This difference will not matter for us as we shall work with abelian fundamental groups, but we find it psychologically easier to know one can initially reduce to the Galois case.

Finally, by $11.1$ of~\cite{AM}, as all algebraic spaces we are working with are pointed, connected, normal (hence geometrically unibranch), and noetherian, their \'{e}tale realisations are pro-finite. Hence, the above Whitehead theorem applies and we turn to verifying the necessary cohomological commutativity result. Note that by $9.3$ and $10.8$ of~\cite{AM}, the cohomology with twisted coefficients of the \'{e}tale homotopy type agrees with \'{e}tale cohomology with coefficients in a local system as usually defined via the \'{e}tale topos (on either a scheme or an algebraic space; the added complexity of algebraic spaces makes no difference to the categorical constructions happening here). Hence for checking the commutativity for cohomology, we may use the standard theory of \'{e}tale cohomology.

We use Deligne's computation of the cohomology of symmetric powers in SGA 4.XVII.5.5.21 of~\cite{SGA4}. Conveniently, Deligne computes cohomology with coefficients in a quite general class of torsion sheaves (and even a derived category thereof), which is exactly the generality we need to apply the Whitehead result above. In fact, we claim that Deligne's result from SGA 4 is precisely an algebraic analogue of Dold's computation of the homology of symmetric powers in~\cite{Dold}, with some added generality (Deligne uses more general coefficient sheaves and works over a general base). Indeed, the general claim of this Dold-Deligne formula is that the (co)homology of a symmetric power is a derived symmetric power of the (co)homology of the original space. For a reader directly reading Dold's paper, this conclusion may be opaque. To provide some orienting remarks, Dold writes his formula in terms of symmetric powers of a resolution by FD-modules. In fact, Dold's FD-modules are simplicial abelian groups, and where one uses resolutions by chain complexes to derive abelian functors, the appropriate resolutions for deriving nonabelian functors such as the symmetric power functor is by simplicial objects. Hence, Dold's formula is precisely a model for the derived symmetric power functor and therefore compares perfectly to Deligne's formula. We now reproduce a simplified statement of the Deligne result $5.5.21$ of SGA 4.XVII:

\begin{thm} Let $f: X \to \Spec k$ be a locally noetherian separated algebraic space, $\mc{A}$ a torsion commutative ring, $n$ a nonnegative integer, and $K$ an object in $D^{b, tor \le 0}(X, \mc{A})$. Then the symmetric Kunneth morphism $$ L\Gamma^n_{ext} Rf_! K \to R \Sym^n(f)_! L\Gamma^n_{ext} K$$ is an isomorphism. \end{thm}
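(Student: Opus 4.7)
The plan is to recognise both sides of the symmetric Künneth morphism as derived $S_n$-invariants of the ordinary $n$-fold pushforward $Rf^n_!(K^{\boxtimes n})$, thereby reducing to the classical (non-symmetric) Künneth formula for $f^n\colon X^n \to \Spec k$ equipped with its $S_n$-action. Throughout, let $q\colon X^n \to \Sym^n X$ denote the quotient morphism, which by the discussion in section $6$ is a finite surjective $S_n$-invariant morphism of algebraic spaces (affineness via Theorem $3.12$ of~\cite{Kolquot}, plus the $S_n$-action being by a finite group).

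The first step is to unfold the construction of $L\Gamma^n_{ext}$: for $K \in D^{b, tor \le 0}(X, \mc{A})$, I would write $L\Gamma^n_{ext} K$ as the derived $S_n$-invariants of $q_* K^{\boxtimes n}$, where the derivation is set up by choosing a suitable simplicial (or termwise-flat and $S_n$-induction-friendly) resolution of $K$ for which the naive external symmetric power is already the correct derived one, then descending to the derived category. Specialising $X$ to $\Spec k$, the same construction recovers the derived symmetric power on the right-hand side of the theorem. Using that $q$ is finite, so $Rq_* = q_* = Rq_!$, one obtains the chain of isomorphisms
\begin{equation*}
R\Sym^n(f)_!\, L\Gamma^n_{ext} K \;\simeq\; \bigl(R\Sym^n(f)_!\, Rq_*\, K^{\boxtimes n}\bigr)^{R S_n} \;\simeq\; \bigl(Rf^n_!\, K^{\boxtimes n}\bigr)^{R S_n},
\end{equation*}
where $(-)^{RS_n}$ denotes the derived $S_n$-invariants. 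Meanwhile, the classical iterated Künneth formula for torsion \'{e}tale sheaves supplies an $S_n$-equivariant isomorphism $Rf^n_!(K^{\boxtimes n}) \simeq (Rf_! K)^{\otimes^L n}$, and taking derived $S_n$-invariants of the right-hand side is precisely $L\Gamma^n_{ext} Rf_! K$ by the same unfolding as above. A careful trace through the construction of the symmetric Künneth morphism shows it agrees with the composite of these identifications.

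The principal obstacle, and the reason Deligne's treatment in SGA $4$.XVII is substantial rather than formal, is that $n!$ need not be invertible in the torsion ring $\mc{A}$, so one cannot cheat by identifying invariants with coinvariants or by averaging over $S_n$; one must genuinely work with the derived divided/symmetric power functor $L\Gamma^n$ and verify that the homotopy-fixed-points construction for $S_n$ commutes suitably with $Rf_!$ and $Rq_*$. The standard device is to resolve $K$ by a complex whose terms are flat and $S_n$-induced in a simplicial sense, so that the comparison is pointwise an ordinary Künneth statement and then descends, and to set up the formalism compatibly with the six-functor package for torsion étale sheaves on algebraic spaces (the algebraic-spaces generality being a mild extension of Deligne's original scheme-theoretic argument, handled by étale-local reduction to the schematic case). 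It is in this setup, rather than in the conceptual reduction outlined above, that the bulk of the technical work resides.
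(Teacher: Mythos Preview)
The paper does not prove this theorem: it is quoted as Deligne's result SGA~4.XVII.5.5.21, and the only argument the paper supplies is for the extension from quasiprojective schemes to algebraic spaces. That extension proceeds by d\'evissage: Deligne's own first reduction (5.5.22.2) cuts $K$ down to something $j_!$-extended from an affine locally closed subscheme, and the paper observes that a separated algebraic space admits a locally finite stratification by locally closed \emph{schematic} subspaces, so one lands back in Deligne's setting and appropriates his proof wholesale. Your suggestion of ``\'etale-local reduction to the schematic case'' is a different mechanism and is less clearly adequate here, since $\Sym^n X$ need not become a scheme \'etale-locally on $X$ in any useful way; the stratification/d\'evissage route is the one the paper (following Deligne) actually uses.

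On the substance of your sketch: the displayed chain
\[
R\Sym^n(f)_!\, L\Gamma^n_{ext} K \;\simeq\; \bigl(Rf^n_!\, K^{\boxtimes n}\bigr)^{R S_n}
\qquad\text{and}\qquad
L\Gamma^n_{ext}\, Rf_! K \;\simeq\; \bigl((Rf_! K)^{\otimes^L n}\bigr)^{R S_n}
\]
is not correct as written if $(-)^{RS_n}$ means homotopy $S_n$-fixed points. The left derived functor $L\Gamma^n$ is built by Dold--Puppe from the ordinary invariants functor on termwise-flat simplicial resolutions, and when $n!$ is not invertible in $\mc{A}$ this is \emph{not} the same as $(M^{\otimes^L n})^{hS_n}$; the latter picks up group cohomology of $S_n$. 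So the reduction ``both sides are derived $S_n$-invariants of $Rf^n_! K^{\boxtimes n}$, hence apply ordinary K\"unneth'' is exactly the step that fails in bad characteristic. You acknowledge this in your final paragraph, but that acknowledgment undercuts the formal argument of the first paragraph rather than repairing it: once you replace $(-)^{RS_n}$ by the genuine Dold--Puppe construction, you can no longer simply pull it through $R\Sym^n(f)_!$ by abstract nonsense, and you are back to needing Deligne's d\'evissage and explicit model-level computations. In short, the paper treats this as a black box plus a stratification remark; your outline gestures at a direct proof but the key identification it relies on is precisely the one that does not hold without the hard work in SGA~4.XVII.5.5.
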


Before we explain the notation in the statement of this theorem, we note that Deligne states his result for a quasiprojective scheme. The quasiprojectivity restriction is there so that $\Sym^n X$ exists in the category of schemes and hence may be removed if we are willing to work throughout in the category of algebraic spaces. Furthermore, Deligne indicates that the theorem should easily generalise to the category of algebraic spaces in remark $5.5.21.1$, and indeed, one of the first reductions Deligne performs in the proof of this result is to reduce via d\'{e}vissage to the case that $K$ is shriek-pushed forward from an affine locally closed subscheme of $X$. We may simply appropriate the d\'{e}vissage result he proves in $5.5.22.2$ and recall that a separated algebraic space enjoys a locally finite stratification into locally closed schematic subspaces to reduce to the same situation. In other words, we may once again suppose that $X$ is an affine scheme and proceed with Deligne's proof. 

We now finally explain the notation in the statement of the theorem. Here $D^{b, tor \le 0}(X, \mc{A})$ is some version of the bounded derived category of $\mc{A}$-module sheaves on $X$ with a tor-dimension bound, $\Gamma^n_{ext}$ is the $n$th symmetric product functor, $L \Gamma^n_{ext}$ is the derived functor thereof, and $Rf_!$ is total cohomology with compact support. Fortunately, we may simplify this statement in the particular case we are interested in: first, for us, $X$ is proper, so that $f_! = f_*$ and $Rf_*$ is the total cohomology $R \Gamma$ (hopefully, this notation will not clash too much with the notation used for the derived symmetric product functor $L \Gamma^n _{ext}$). Next, we take $K$ to simply be a single sheaf in degree zero consisting of some $\mc{A}$-local system. In particular, this choice of $K$ certainly satisfies criterion $5.5.13.1$ of~\cite{SGA4} that for all geometric points $x \in X$, $K_x$ is (homotopic to) a complex of flat $\mc{A}_x$-modules, as for us, $K_x$ may be identified with $A_x$, so in particular, $5.5.14$ applies and $L \Gamma^n_{ext} K$ is just $\Sym^n K$ defined as a descent of the $S_n$-invariance subsheaf of the external box power $K^{\boxtimes n}$. As such, we make a further simplification of Deligne's statement to give the following:

\begin{thm} Let $X$ be a proper algebraic space, $\mc{A}$ a torsion commutative ring, $n$ a nonnegative integer, and $K$ an $\mc{A}$-local system, i.e. an \'{e}tale bundle of groups with fibre $\mc{A}$. Then the symmetric Kunneth morphism $$ L \Gamma^n_{ext} R\Gamma K \to R\Gamma (\Sym^n K)$$ is an isomorphism. \end{thm}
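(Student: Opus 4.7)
The plan is to deduce this statement directly from the preceding Deligne theorem by verifying its hypotheses in our restricted setting and carrying out the two advertised simplifications. First I would take $f: X \to \Spec k$ to be the structure map and view $K$ as a complex concentrated in degree zero, so that it lies in $D^{b, \text{tor} \le 0}(X, \mc{A})$: the condition $5.5.13.1$ asks that each geometric stalk $K_{\bar{x}}$ be homotopic to a complex of flat $\mc{A}_{\bar x}$-modules, and since $K$ is a local system with fibre $\mc{A}$ every stalk is (non-canonically) free of rank one and hence flat.

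Next I would perform the two simplifications. Because $X$ is proper, proper base change gives $Rf_! = Rf_*$, and composing with the identification $Rf_* = R\Gamma$ over $\Spec k$ rewrites the source of Deligne's arrow as $L\Gamma^n_{ext} R\Gamma K$. On the target side, Deligne's $5.5.14$ in SGA 4.XVII applies precisely under the stalkwise flatness condition just verified, and identifies $L\Gamma^n_{ext} K$ with the ordinary symmetric power $\Sym^n K$, constructed as the descent of the $S_n$-invariant subsheaf of the external box power $K^{\boxtimes n}$ along $X^n \to \Sym^n X$. Substituting these two identifications into the conclusion of the previous theorem yields exactly the symmetric Kunneth morphism in the statement.

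The only step requiring genuine attention is that Deligne formally works in the category of quasiprojective schemes, whereas our $X$ is a proper algebraic space. As the discussion immediately preceding the theorem indicates, Deligne's own strategy begins by reducing via d\'{e}vissage ($5.5.22.2$) to the case where $K$ is shriek-extended from an affine locally closed subscheme. Because a separated algebraic space admits a locally finite stratification by locally closed schematic subspaces, the same d\'{e}vissage applies in our setting and reduces the theorem to the case of an affine scheme, after which Deligne's proof goes through verbatim. This reduction is the only non-routine element; once it is in place, the rest of the deduction is bookkeeping, so I expect the main obstacle to be a careful (but mild) verification that each categorical input Deligne uses — the formation of $L\Gamma^n_{ext}$, the quasicoherent shriek-functors, and the box-power construction — continues to behave as expected in the category of algebraic spaces.
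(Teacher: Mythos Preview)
Your proposal is correct and follows essentially the same route as the paper: verify the stalkwise flatness condition $5.5.13.1$ so that $5.5.14$ identifies $L\Gamma^n_{ext}K$ with $\Sym^n K$, replace $Rf_!$ by $R\Gamma$ via properness, and handle the passage from quasiprojective schemes to algebraic spaces by appropriating Deligne's d\'{e}vissage $5.5.22.2$ together with a stratification into locally closed schematic pieces. One small quibble: the identification $Rf_! = Rf_*$ for proper $f$ is not ``proper base change'' but simply the definition of $f_!$, and your mention of ``quasicoherent shriek-functors'' should read \'{e}tale; otherwise this matches the paper's argument.
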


We shall continue to not explain exactly what $L \Gamma^n_{ext}$ is, accepting it as some explicit prescription for computing the total cohomology $R\Gamma (\Sym^n X, \Sym^n K)$ out of the original cohomology $R\Gamma (X, K)$. Note that (in agreement with Dold) exactly the same prescription holds in the topological case where now we do not even need to take compactly-supported cohomology for our base-change theorems used in the proof to hold, i.e. if $K$ were a topological local system on a topological space $X$ rather than an \'{e}tale local system on a scheme or algebraic space $X$: indeed, Deligne's proof in the algebraic category proceeds by reduction to the topological case anyway, and as Deligne explains in the introduction to $5.5$, the proposition in the topological category may be directly verified on the cochain level. 

Now, let us prove the cohomological criterion we need in the pro-Whitehead result to show that $\Sym^n (X_{et}) \to (\Sym^n X)_{et}$ is a weak equivalence, i.e. we need to show that for all $q$ and all twisted coefficient groups of the form $A[\pi_1 \Sym^n X / N]$, we have $$H^q((\Sym^n X)_{et}, A[\pi_1 \Sym^n X / N]) \to H^q(\Sym^n (X_{et}), A[\pi_1 \Sym^n X / N])$$ is an isomorphism. We may of course suppose that $n \ge 2$, so that $\pi_1 \Sym^n X$ may be identified with $\pi_1 X^{ab}$. We recall that for torsion sheaves, the cohomology of the \'{e}tale realisation agrees with \'{e}tale cohomology. We apply the Dold-Deligne formula in both the topological and algebraic categories where we take $\mc{A}$ to be the torsion commutative ring $A[\pi_1 X^{ab} / N]$ with its group ring structure and the local system $K$ on $X$ to be the $\mb{Z}[\pi_1 X]$-module $A[\pi_1 X^{ab}/N]$, where $\pi_1(X)$ acts in the evident way on the generators. (We use the isomorphism between the \'{e}tale fundamental group of $X$ as an algebraic space and the fundamental group of $X_{et}$ to speak equivalently about topological local systems on $X_{et}$ and \'{e}tale local systems on $X$.) Now, we have that $\Sym^n K$, as defined above, is precisely the original local system $A[\pi_1 \Sym^n X / N]$, and so we have the following chain of isomorphisms: \begin{eqnarray*} R \Gamma ((\Sym^n X)_{et}, A[\pi_1 \Sym^n X / N]) & \stackrel{\sim}{\to} & R\Gamma(\Sym^n X, A[\pi_1 \Sym^n X / N]) \\ & \stackrel{\sim}{\to} & L\Gamma^n_{ext} R\Gamma(X, K) \\ & \stackrel{\sim}{\to} &  L \Gamma^n_{ext} R\Gamma(X_{et}, K) \\ & \stackrel{\sim}{\to} & R\Gamma (\Sym^n (X_{et}), A[\pi_1 \Sym^n X / N]. \end{eqnarray*} This completes the cohomological comparison and thus, by the pro-Whitehead theorem, the proof of the weak equivalence.

\section{Appendix: Proof of stability in the topological case}

We now finish the proof of Theorem $7$ from Section $4$.

\begin{proof} Theorem $22.7$ of~\cite{Steenrod} or Corollary $5.2$ of~\cite{Milgram} already shows homological stability for the symmetric powers $\Sym^n X$, with Milgram providing explicit descriptions of these homology groups. As such, if $X$ is simply-connected, our Proposition $9$ implies that all the symmetric powers are simply-connected and so Steenrod's result suffices to show homotopical stability. For general $X$, one can follow Steenrod's or Milgram's analysis and verify that the homological isomorphisms they find hold for twisted coefficient systems as well. Instead, in this appendix, we provide an argument that is more explicitly homotopical and, we find, provides more geometric insight for why stabilisation occurs.

We need an important lemma as a technical input for our homotopical argument. Consider the representation $V$ of the symmetric group $S_n$ formed as a direct sum of $k$ copies of the standard $n$-dimensional representation. Pick an equivariant metric and consider the unit sphere $S^{nk-1}$ inside $V$, together with its inherited $S_n$-action. We then claim the following and will return to its proof later:

\begin{lemma} For the $S_n$-action on $S^{nk-1}$ as above, the quotient $S^{nk-1}/S_n$ is $(n-1)$-connected. \end{lemma}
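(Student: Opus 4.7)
\emph{Plan.} The strategy is to decompose $V$ into its fixed and reduced parts via a join decomposition of the sphere, reducing the statement to the connectivity of a ``fully reduced'' $S_n$-quotient, which I handle by induction on $k$ with base case furnished by Coxeter theory. Write $V = V_0 \oplus V_1$, where $V_0 := V^{S_n}$ is the $k$-dimensional diagonal subspace (one copy of the $S_n$-fixed diagonal in each of the $k$ summands of $V$) and $V_1$, its orthogonal complement under any $S_n$-invariant inner product, is isomorphic as an $S_n$-representation to $k$ copies of the reduced $(n-1)$-dimensional standard representation $W$. There is an $S_n$-equivariant homeomorphism of spheres $S(V) \cong S(V_0) \ast S(V_1)$, and since $S_n$ acts trivially on $V_0$ the quotient of the join commutes through $V_0$, giving
\[
S^{nk-1}/S_n \;\cong\; S^{k-1} \ast \bigl(S(V_1)/S_n\bigr).
\]

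Applying the standard join-connectivity formula $\mathrm{conn}(X \ast Y) = \mathrm{conn}(X) + \mathrm{conn}(Y) + 2$ with $\mathrm{conn}(S^{k-1}) = k-2$, the claim reduces to showing $\mathrm{conn}(S(V_1)/S_n) \geq n-1-k$. When $k \geq n$ this bound is at most $-1$ and only non-emptiness of $S(V_1)/S_n$ is required, which is immediate. So assume $1 \leq k < n$ and proceed by induction on $k$.

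For the base case $k = 1$, $V_1 = W$ is the reflection representation of $S_n$ viewed as the Coxeter group of type $A_{n-1}$, and a fundamental Weyl chamber is a simplicial cone whose intersection with $S(W)$ is a spherical $(n-2)$-simplex. The restriction of the quotient map to this closed chamber is a homeomorphism onto $S(W)/S_n$, as the reflecting walls are themselves fixed pointwise by the corresponding reflections and so contribute no further identifications. Hence $S(W)/S_n$ is an $(n-2)$-simplex, which is contractible and in particular $(n-2)$-connected.

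For the inductive step $1 < k < n$, use the further join decomposition $S(V_1) \cong S(W^{k-1}) \ast S(W)$ together with the homotopy pushout presentation
\[
(X \ast Y)/G \;\simeq\; X/G \;\cup_{(X \times Y)/G}\; Y/G,
\]
valid when $G$ acts on both factors of a join. This reduces the problem to connectivity estimates for $S(W^{k-1})/S_n$, $S(W)/S_n$ and the product quotient $(S(W^{k-1}) \times S(W))/S_n$. The first is $(n-k)$-connected by the inductive hypothesis and the second is contractible by the base case; the main obstacle is the third, for which the key geometric input is that the non-free locus in $V_1$, the union of fixed subspaces $V_1^\sigma = (W^\sigma)^k$ over $\sigma \neq 1$, has codimension at least $k$ in $V_1$ (since each $W^\sigma \subsetneq W$ has codimension at least $1$, which is multiplied $k$-fold in the tensor description of $V_1$). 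A Mayer--Vietoris argument on the pushout then assembles these ingredients into the desired bound $\mathrm{conn}(S(V_1)/S_n) \geq n-1-k$.
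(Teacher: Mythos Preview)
Your first reduction is correct and rather elegant: the equivariant join decomposition $S(V)\cong S(V_0)\ast S(V_1)$ together with triviality of the action on $V_0$ does give $S^{nk-1}/S_n\cong S^{k-1}\ast\bigl(S(V_1)/S_n\bigr)$, reducing the lemma to $\mathrm{conn}\bigl(S(W^k)/S_n\bigr)\ge n-1-k$, and the base case $k=1$ via the closed Weyl chamber (so that $S(W)/S_n$ is a spherical $(n-2)$-simplex, hence contractible) is a nice observation. This is a genuinely different route from the paper, which instead identifies $S^{nk-1}/S_n$ with the one-point compactification of $\Sym^n\bR^k$ and proves $H^{BM}_*(\Sym^n\bR^k)=0$ for $*<n$ by induction on $k$ using the projection $\Sym^n(\bR^{k-1}\times I)\to\Sym^n I$ and its product structure over the face stratification of the simplex.

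The inductive step, however, does not close. Since $S(W)/S_n$ is contractible, your homotopy pushout identifies $S(W^k)/S_n$ with the homotopy cofibre of the projection
\[
C:=\bigl(S(W^{k-1})\times S(W)\bigr)/S_n\ \longrightarrow\ A:=S(W^{k-1})/S_n.
\]
Running the long exact sequence with $A$ assumed $(n-k)$-connected, one sees that the cofibre is $(n-1-k)$-connected precisely when $C$ is (homologically) $(n-2-k)$-connected. You flag this as ``the main obstacle'' but do not prove it. The codimension statement you cite---that the non-free locus in $W^k$ has codimension at least $k$---is true but insufficient: it only controls the homotopy of the free locus through degree $k-2$, whereas you need connectivity of order $n-k$; and in any case $C$ is the diagonal quotient of a \emph{product} of two spheres sitting inside $W^k$, not of $S(W^k)$ itself, so the codimension bound does not transfer directly. ``A Mayer--Vietoris argument on the pushout'' cannot manufacture the required connectivity of $C$ from the ingredients you list. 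As written, the induction has been reduced to a statement about $C$ that is of comparable difficulty to the original lemma and is left unproved.
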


So consider now some connected CW complex $X$. We look at the natural decomposition we get for $\Sym^n X$ given a CW decomposition for $X$; first, as $X$ is connected, we choose a CW decomposition which contains a single $0$-vertex. Now, from a CW decomposition for $X$, we do not quite get a CW decomposition for $\Sym^n X$: instead, we get a prescription for building up $\Sym^n X$ from products of symmetric powers of cells (rather than from cells only). Indeed, let $e_i$ denote the cells of $X$, with $e_i$ homeomorphic to $D^{d_i}$ for some $d_i$ and attached along the boundary $S^{d_i - 1}$. We fix $e_0$ to be the unique $0$-cell. Now, we naturally have a decomposition of $\Sym^n X$ where we iteratively glue on spaces of the form $\Sym^{n_0} e_0 \times \Sym^{n_1} e_1 \times \cdots \times \Sym^{n_{\ell}} e_{\ell}$, where $\sum_{i=1}^{\ell} n_i = n$ (the $n_i$ are allowed to vanish). More precisely, denote by $(\Sym^n X)_r$ the intermediate spaces we get in this process of gradually building up $\Sym^n X$; what we mean by gluing on these spaces means that we iteratively form pushouts (which are homotopy pushouts, as the inclusion of the boundary is a cofibration) as follows: \begin{equation*} \xymatrix{ \partial (\Sym^{n_0} e_0 \times \cdots \times \Sym^{n_{\ell}} e_{\ell}) \ar[r] \ar[d] & (\Sym^n X)_r \ar[d] \\ \Sym^{n_0} e_0 \times \cdots \times \Sym^{n_{\ell}} e_{\ell} \ar[r] & (\Sym^n X)_{r+1}. } \end{equation*} The homotopy type of a symmetric power only depends on the homotopy type of the original space, so the actual space $\Sym^{n_0} e_0 \times \cdots \times \Sym^{n_{\ell}} e_{\ell}$ we are gluing on is still contractible, so that in forming the homotopy pushout above, we are essentially just coning off its boundary. We hence want to understand the homotopy type of this boundary. Now, in general, if $Y$ and $Z$ are both contractible, then the boundary of their product is homotopic to the join of their boundaries: $$\partial (Y \times Z) = (Y \times Z) \setminus (Y^{\circ} \times Z^{\circ}) = (Y \times \partial Z) \underset{\partial Y \times \partial Z}{\cup} (\partial Y \times Z) \simeq (\partial Y) \star (\partial Z).$$ We thus need to understand the boundary of a symmetric product of a cell, but provided that $k \ge 1$, we have $$\partial \Sym^n D^k \simeq S^{nk-1}/S_n,$$ where the group action is exactly as described in the setup to Lemma $24$. Hence this boundary $\partial \Sym^n D^k$ is at least $(n-1)$-connected. As the join of an $(n-1)$-connected and $(m-1)$-connected space is $(n+m)$-connected, we see that provided $n_0 = 0$ (so that we only consider products of symmetric powers of positive-dimensional cells), we have that $\partial(\Sym^{n_0} e_0 \times \cdots \times \Sym^{n_{\ell}} e_{\ell})$ will be at least $(\sum n_i) - 1 = (n-1)$-connected. Therefore, coning off this space will not affect any homotopy groups in degree $(n-1)$ or less and can only induce a surjection on $\pi_n$. 

Now, consider the inclusion $\Sym^n X \hookrightarrow \Sym^{n+1} X$; the latter is obtained from the former by gluing on products of symmetric powers of cells. More precisely, a product of symmetric powers of cells $\Sym^{n_0} e_0 \times \cdots \Sym^{n_{\ell}} e_{\ell}$ in the decomposition for $\Sym^{n+1} X$ is also in the copy of $\Sym^n X$ sitting inside $\Sym^{n+1}X$ if and only if $n_0 \ne 0$. In other words, the new products of symmetric powers which we are gluing on have $n_0 = 0$. Hence the connectivity bound from the previous paragraph applies and we have the desired stability result. \end{proof}

We now return to the proof of Lemma $24$. In fact, the lemma is equivalent to the homotopical stabilisation of Theorem $7$ for the special case of symmetric powers of the sphere $S^k$. Indeed, the space $S^{nk-1}/S_n$ is exactly the cofibre of $\Sym^{n-1} S^k \hookrightarrow \Sym^n S^k$ (which is also a homotopy cofibre, as the map is a cofibration). Hence, for $k > 1$, we could claim that the lemma already follows from Steenrod's result as $S^k$ is simply-connected. (As all symmetric powers of $S^1$ are homotopy equivalent to $S^1$, the result is particularly trivial for $k = 1$). However, we provide our own argument for Steenrod's homological stabilisation result in this special case. Steenrod's clever algebraic decomposition of the chains of a symmetric power is certainly shorter, but we like our more pedestrian analysis for the special case of the sphere, which has the side benefit of keeping this part of the paper self-contained.

\begin{proof} We throughout use the long exact sequence for Borel-Moore homology that if $X$ has a closed subspace $Y$ with open complement $U$, we have $$ \cdots \to H_i^{BM}(Y) \to H_i^{BM}(X) \to H_i^{BM}(U) \to H_{i-1}^{BM}(Y) \to \cdots.$$ For example, consider $S^k$ as the one-point compactification of $\mb{R}^k$ with base point at $\infty$ so that $\Sym^n S^k \setminus \Sym^{n-1} S^k$ is exactly the collection of points with no point at the base point at $\infty$, i.e. $\Sym^n \mb{R}^k$. Alternatively, we can describe our space of interest $S^{nk-1}/S_n$ as the one-point compactification of $\Sym^n \mb{R}^k$. In any case, we see that it suffices to show $H_*^{BM} \Sym^n \mb{R}^k$ is trivial for $* < n$, which we show by induction on $k$, the case $k = 1$ being trivial. Now, in general, we consider the partial compactification of $\mb{R}^k$ to $\mb{R}^{k-1} \times I$, with $I$ denoting the unit interval $[0, 1]$. We have $\Sym^n \mb{R}^k = \Sym^n (\mb{R}^{k-1} \times I) \setminus B$, where we will describe $B$ shortly. We describe $\Sym^n (\mb{R}^{k-1} \times I)$ using the natural map $\pi: \Sym^n (\mb{R}^{k-1} \times I) \to \Sym^n I$, where we use the coordinatisation $\Sym^n I = \{(x_1, \cdots, x_n) | 0 \le x_1 \le \cdots \le x_n \le 1 \}$. This simplex has a natural finite stratification by vertices, edges, $2$-dimensional faces, and so on; over each locally closed stratum, $\pi$ is a product. For example, over the stratum of $\Sym^6 I$ given by $x_1 = x_2, x_4 = x_5 = x_6$, the fibre of $\pi$ is $\Sym^2 \mb{R}^{k-1} \times \mb{R}^{k-1} \times \Sym^3 \mb{R}^{k-1}$. In general, the fibres of $\pi$ will be products of symmetric powers of $\mb{R}^{k-1}$ so that by the Kunneth formula for Borel-Moore homology and the inductive hypothesis, the fibre of $\pi$ has no Borel-Moore homology in degrees smaller than $n$. We also note that $B$ is the preimage under $\pi$ of the two faces of the simplex $\Sym^n I$ given by $x_1 = 0$ and $x_n = 1$. We now claim that both $\Sym^n (\mb{R}^{k-1} \times I)$ and $B$ have no Borel-Moore homology in degrees smaller than $n$, which will establish the claim by the long exact sequence for Borel-Moore homology. However, these claims easily follow from the description of $\pi$ as having a product structure subordinate to a finite stratification with fibres having vanishing Borel-Moore homology in low degrees: simply iteratively use the long exact sequence for Borel-Moore homology. Hence the claim is established. \end{proof}

\bibliographystyle{alpha}
\bibliography{ref}
\end{document}